\DeclareFontFamily{U}{mathx}{\hyphenchar\font45}
\DeclareFontShape{U}{mathx}{m}{n}{
      <5> <6> <7> <8> <9> <10>
      <10.95> <12> <14.4> <17.28> <20.74> <24.88>
      mathx10
      }{}
\DeclareSymbolFont{mathx}{U}{mathx}{m}{n}
\DeclareMathAccent{\widecheck}{0}{mathx}{"71}
\DeclareMathAccent{\wideparen}{0}{mathx}{"75}
\DeclareSymbolFont{usualmathcal}{OMS}{cmsy}{m}{n}
\DeclareSymbolFontAlphabet{\mathcal}{usualmathcal} 
\def\e{{\rm e}}
\def\cic{\bm}
\def\eps{\varepsilon}
\def\d{{\rm d}}
\def\dist{{\rm dist}}
\def\R {\mathbb{R}}
\def\B {{\mathsf B}}
\def\S{{\mathbf S}}
\def \l {\langle}
\def \r {\rangle}
\def \and{\qquad\text{and}\qquad}
\newcommand{\supp}{\mathrm{supp}\,}
\newcounter{thms}
\newtheorem{proposition}{Proposition}
\newtheorem{theorem}[thms]{Theorem}
\newtheorem*{theorem*}{Theorem}
\newtheorem*{proposition*}{Proposition}
\newtheorem{lemma}[proposition]{Lemma}
\theoremstyle{definition}
\newtheorem{remark}[proposition]{Remark}
\numberwithin{proposition}{section}
\numberwithin{equation}{section}
\title[Sparse domination of variational Carleson operators]{Positive sparse domination of variational Carleson operators}
 \author{Francesco  Di Plinio}
 \author{Yen Q.\ Do}
\address{Department of Mathematics,
The University of Virginia,\newline \indent  Charlottesville, VA 22904-4137, USA}
\email[F.\ Di Plinio]{francesco.diplinio@virginia.edu}
\email[Y. Do]{yen.do@virginia.edu}
\author{Gennady N.\ Uraltsev}
\address{ Mathematics Department,	Universit\"at Bonn, \newline \indent 
Endenicher Allee 60,	D - 53115 Bonn, Germany
  }
  \email[G.\ Uraltsev]{gennady.uraltsev@math.uni-bonn.de}
 \subjclass[2010]{Primary: 42B20. Secondary: 42B25}
 \keywords{Positive sparse operators,  variational Fourier series,  weighted norm inequalities}
\thanks{F. Di Plinio was partially
supported by the National Science Foundation under the grant
   NSF-DMS-1500449 and  NSF-DMS-1650810. Y.\ Do was partially supported by the National Science Foundation under the grant
 NSF-DMS-1521293. }
\begin{document}
\begin{abstract} Due to its nonlocal nature, the $r$-variation norm
  Carleson operator $C_r$ does not yield to the sparse domination
  techniques of Lerner \cite{Ler2015,Ler2013}, Di Plinio and Lerner
  \cite{DPLer14}, Lacey \cite{Lac2015}.  We overcome this difficulty
  and prove that the dual form to $C_r$ can be dominated by a positive
  sparse form involving $L^p$ averages. Our result strengthens the
  $L^p$-estimates by Oberlin et.\ al. \cite{OSTTW}. As a corollary, we
  obtain quantitative weighted norm inequalities improving on
  \cite{DoLac12F} by Do and Lacey. Our   proof 
  relies on the localized outer $L^p$-embeddings of Di Plinio and Ou
  \cite{DPOu2} and Uraltsev \cite{Ur15}. 
 \end{abstract}
\maketitle
\section{Introduction and main results}

The technique of controlling Calder\'on-Zygmund singular integrals,
which are a-priori \emph{non-local}, by \emph{localized} positive sparse operators
has recently emerged as a leading trend in Euclidean Harmonic
Analysis. We briefly review the advancements which are most relevant
for the present article and postpone further references to the body of
the introduction.  The original domination in norm result of
\cite{Ler2013} for Calder\'on-Zygmund operators has since been
upgraded to a \emph{pointwise} positive sparse domination by Conde and
Rey \cite{CondeRey2015} and Lerner and Nazarov \cite{LerNaz2015}, and
later by Lacey \cite{Lac2015} by means of an inspiring stopping time
argument forgoing local mean oscillation. Lacey's approach was further
clarified in \cite{Ler2015}, resulting in the following principle: if $T$ is a sub-linear operator
of weak-type $(p,p)$ and in addition the maximal operator
\begin{equation}
\label{nonloc}
f\mapsto \sup_{  Q \subset \R \textrm{ interval}} \left\|T(f\cic{1}_{\R\setminus 3Q}) \right\|_{L^\infty(Q)} \cic{1}_Q 
\end{equation}
embodying the \emph{non-locality} of $T$, is of weak-type $(s,s)$, for some $1\leq p\leq s<\infty$, then $T$ is pointwise dominated by a positive sparse operator involving $L^s$ averages of $f$.

 The   principle \eqref{nonloc}  extends to certain modulated singular integrals.  Of interest for us is  the maximal partial Fourier transform   
\[
  Cf(x)=\sup_{N} \left|\int_{\infty}^{N} \widehat f(\xi)
    \,\e^{ix\xi}\d\xi\right|\] also known as Carleson's operator on the
real line. The crux of the matter is that \eqref{nonloc} follows for
$T=C$ from its representation as a maximally modulated Hilbert
transform, a fact already exploited in the classical weighted norm
inequalities for $C$ by Hunt and Young \cite{HY}, and in the more
recent work \cite{GMS}.  Together with sharp forms of the
Carleson-Hunt theorem near the endpoint $p=1$ \cite{DP2} this allows,
as observed by the first author and Lerner in \cite{DPLer14}, the
domination of $C$ by sparse operators and thus leads to sharp weighted
norm inequalities for $C$.

  In this article we consider the  $r$-variation norm Carleson operator, which is defined for Schwartz functions on the real line as  
\[
C_r f(x) =  \sup_{N\in \mathbb N} \sup_{ \xi_0< \cdots< \xi_{N}} \left(\sum_{j=1}^N \left|\int_{\xi_{j-1}}^{\xi_{j}} \widehat f(\xi)   \,\e^{ix\xi}\d\xi\right|^r\right)^{1/r}.
\]
The importance of $C_r$ is revealed by the transference principle,
presented in \cite[Appendix B]{OSTTW}, which shows how $r$-variational
convergence of the Fourier series of $f\in L^p(\mathbb{T};w)$ for a
weight $w$ on the torus $\mathbb T$ follows from
$L^p(\R; w)$-estimates for the sub-linear operator $C_r$.  Values of
interest for $r$ are $2<r<\infty$. Indeed the main result of
\cite{OSTTW} is that in this range, $C_r$ maps into $L^p$ whenever
$p>r'$, while no $L^p$-estimates hold for variation exponents
$r\leq 2$.  Unlike   the Carleson operator, its variation norm
counterpart $C_r$ does not have an explicit kernel form and thus fails to yield
to Hunt-Young type techniques.  The same essential difficulty is
encountered in the search for $L^q$-bounds for the nonlocal maximal
function \eqref{nonloc} when $T=C_r$. Therefore, the   
approach via \eqref{nonloc} does not seem to be applicable to $C_r$. In the series
\cite{DoLac12F,DoLac12W}, the second author and Lacey circumvented
this issue through a direct proof of $A_p$-weighted inequalities for
$C_r$ and its Walsh analogue, based on weighted phase plane analysis.

The main result of the present article is that a sparse domination
principle for $C_r$ holds in spite of the difficulties described
above. More precisely, we sharply dominate the dual form to the
$r$-variational Carleson operator $C_r$ by a single positive sparse
form involving $L^p$ averages, leading to an effortless strengthening
of the weighted theory of \cite{DoLac12F}. Our argument abandons  
\eqref{nonloc}   in favor of a stopping time construction,
relying on the localized Carleson embeddings for suitably modified
wave packet transforms of \cite{DPOu2} by the first author and Yumeng
Ou, and \cite{Ur15} by the third author. In particular, our technique
requires no \emph{a-priori} weak-type information on the operator $T$. A similar approach was   employed by
Culiuc, Ou and the first author in \cite{CuDPOu} in the proof of a
sparse domination principle for the family of modulation invariant
multi-linear multipliers whose paradigm is the bilinear Hilbert
transforms. Interestingly, unlike \cite{CuDPOu}, our construction of the sparse collection in Section \ref{secpf1} seems to be the first in literature which does not make any use of dyadic grids.

 We believe that intrinsic sparse domination can prove
useful in the study of other classes of multi-linear operators lying
way beyond the scope of Calder\'on-Zygmund theory, such as the
iterated Fourier integrals of \cite{DoMuscTh} and the sub-dyadic
multipliers of \cite{BenBeltr15}.

 To formulate our main theorem, we recall the notation 
\[
\l f \r_{I,p}:=\Big(\frac{1}{|I|}\int |f|^p\, \d x\Big)^{\frac1p}, \qquad 1\leq p<\infty
\]
where $I \subset \R$ is any interval, and the notion of a \emph{sparse collection}
of intervals. We say that the countable collection of intervals
$ I\in \mathcal S$ is $\eta$-sparse for some $0<\eta\leq 1$ if there
exist a choice of measurable sets $\{E_I\,\colon I \in \mathcal \S\}$ such that
\[
E_I \subset I,\qquad |E_I| \geq \eta |I|, \qquad E_I \cap E_J  = \varnothing
\quad \forall  I,J\in \mathcal S,\; I \neq J.
\]
\begin{theorem} \label{thmmain}
Let $2<r<\infty$ and $p>r'$. Given $f,g\in \mathcal
C_0^\infty(\R)$   there exists a   sparse collection $\mathcal S= \mathcal S(f,g,p)$ and an absolute constant $K=K(p)$ such that
\begin{equation}
\label{sparsedom}
|\l C_r f, g\r| \leq K(p) \sum_{I \in \mathcal S} |I| \l f\r_{I,p}  \l g\r_{I,1}. 
\end{equation}
\end{theorem}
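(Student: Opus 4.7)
The plan is to combine the outer-$L^p$ wave-packet representation of the variational Carleson form from \cite{DPOu2,Ur15} with a stopping-time construction in the spirit of \cite{CuDPOu}, adapted to the non-dyadic geometry imposed by the translation-modulation invariance of $C_r$. Throughout, $f,g\in\C_0^\infty(\R)$ are fixed.

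First I would use the modified wave-packet transforms of \cite{Ur15} to recast the dual form as an outer integral
\[
\l C_r f, g\r = \int_X F\cdot G\,\d\mu
\]
on an upper half-space $X=\R\times(0,\infty)\times\R$ parametrizing translation, scale and modulation, equipped with a Carleson-type outer measure $\mu$ whose generating sets are the \emph{trees} anchored over an interval. Here $F$ is an $\ell^r$-valued embedding of $f$ encoding the variational supremum defining $C_r$, and $G$ is a (less singular) wave-packet embedding of $g$. An outer Hölder inequality adapted to these sizes yields, for every tree $T$ with top interval $I$,
\[
\Big|\int_T F\cdot G\,\d\mu\Big|\lesssim |I|\,\size_p(F;T)\,\size_1(G;T),
\]
while the localized Carleson embedding theorems of \cite{DPOu2,Ur15} bound $\size_p(F;T)\lesssim \l f\r_{I^*,p}$ and $\size_1(G;T)\lesssim \l g\r_{I^*,1}$ on a mild enlargement $I^*$ of $I$.

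Next, I would construct $\mathcal S$ greedily. Fixing an interval $I_0$ containing $\supp f\cup\supp g$, declare $I_0\in\mathcal S$; and given $I\in\mathcal S$, let $\mathcal E_I$ be the collection of \emph{maximal} subintervals $J\subset I$ admitting a tree $T$ over $J$ for which either
\[
\size_p(F;T)>K\l f\r_{I,p} \qquad\text{or}\qquad \size_1(G;T)>K\l g\r_{I,1}.
\]
Then append $\mathcal E_I$ to $\mathcal S$ and iterate. Choosing $K=K(p)$ large and invoking the weak-type endpoint of the outer embeddings gives $|\bigcup\mathcal E_I|\le\tfrac12|I|$, whence $\mathcal S$ is $\tfrac12$-sparse. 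Because the trees in $X$ are anchored on intervals of arbitrary position, this Vitali-type selection is intrinsically non-dyadic. Decomposing the outer integral as
\[
\int_X F\cdot G\,\d\mu=\sum_{I\in\mathcal S}\int_{T_I\setminus\bigcup_{J\in\mathcal E_I}T_J}F\cdot G\,\d\mu,
\]
and applying the outer Hölder estimate on each summand, where by construction the stopping criterion forces $\size_p(F;\cdot)\lesssim \l f\r_{I,p}$ and $\size_1(G;\cdot)\lesssim \l g\r_{I,1}$, produces exactly the right-hand side of \eqref{sparsedom}.

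The hard part is the interplay between the absence of a dyadic grid and the iterative stopping: the Vitali-type extraction replacing dyadic nesting must be engineered so that the localized embedding theorems of \cite{DPOu2,Ur15} remain applicable after passing from $T_I$ to $T_I\setminus\bigcup_{J\in\mathcal E_I}T_J$, which in turn requires that the stopping condition be formulated at the level of \emph{trees} over $J$ rather than at the level of individual tiles. A secondary subtlety is that the $r$-variational size controlling $F$ lacks a clean John--Nirenberg-type structure, so the quantitative passage from $\size_p(F;T)$ to $\l f\r_{I^*,p}$ provided by \cite{Ur15} must be invoked with care to avoid inflating $\l f\r_{I,p}$ to a larger maximal average on the right-hand side of \eqref{sparsedom}.
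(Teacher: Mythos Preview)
Your outline shares the paper's architecture—wave-packet representation on the upper $3$-space, outer H\"older, localized embeddings from \cite{DPOu2,Ur15}, and a stopping-time construction of $\mathcal S$—but it contains a misconception and a genuine gap. The misconception: after linearizing the $r$-variation via $\ell^{r'}$-duality, the parameters $\vec\xi(x),\vec a(x)$ are functions of the \emph{spatial} variable and are therefore absorbed into the embedding of $g$, not $f$. In the paper $F(f)$ is the ordinary wave packet transform \eqref{wpt}, while $A(g)$ is the variationally truncated transform \eqref{wptm} depending on $\vec\xi,\vec a$; the restriction $p>r'$ enters through the embedding for $A(g)$ (Proposition~\ref{GU} requires $\tau>r'$), not through any ``$\ell^r$-valued embedding of $f$''. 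Your closing remark about the variational size of $F$ lacking John--Nirenberg structure is thus aimed at the wrong object.

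The gap is the treatment of tails. The localized embeddings (Propositions~\ref{FY}--\ref{GU}) control $F(f\cic{1}_{3Q})$ and $A(g\cic{1}_{3Q})$ on $\mathsf T(Q)$, not the global transforms. To iterate from a parent $Q$ to a child $I$ one must pass from $\B_I(f\cic{1}_{3Q},g\cic{1}_{3Q})$ to $\B_I(f\cic{1}_{3I},g\cic{1}_{3I})$, and the cross terms with at least one of $f,g$ restricted to $3Q\setminus 3I$ must be shown to be $\lesssim |Q|\l f\r_{3Q,p}\l g\r_{3Q,1}$. This is the content of Lemma~\ref{taillemma} and estimates \eqref{tailsdom0}--\eqref{tailsdom}, a substantial and nontrivial portion of the argument with no analogue in your telescoping identity. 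Your sparseness step has the same defect: the weak-type control of $|\bigcup\mathcal E_I|$ follows from the embeddings only after localizing to $3I$, and the tail $f\cic{1}_{\R\setminus 3I}$ contributes on $\mathsf T(I)$ a quantity of order $\inf_{3I}\mathrm M_p f$, which is not dominated by $\l f\r_{I,p}$ in general. Precisely for this reason the paper's exceptional set $E_Q$ also contains the superlevel sets of $\mathrm M_p f$ and $\mathrm M_1 g$ (see \eqref{maxfbd}), a step your stopping rule omits.
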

A corollary of Theorem \ref{thmmain} is that $C_r$ extends to a bounded
sub-linear operator on $L^q(\R)$ whenever $q>r'$.  As a matter of fact,
let us fix $q\in (r',\infty]$, and choose $p \in(r',q)$. Denoting by
\[
  \mathrm M_p f(x) =\sup_{I\ni x} \l  f \r_{I,p}
\]
the $p$-th Hardy-Littlewood maximal function, the estimate of Theorem \ref{thmmain} and
the  fact that $\mathcal S$ is sparse yields
\[
  |\l C_r f, g\r| \lesssim \sum_{I \in \mathcal S} |E_I| \l f\r_{I,p} \l
  g\r_{I,1} \leq \l \mathrm M_{p} f, \mathrm{M}_1g \r \lesssim \| \mathrm M_{p}
  f\| _q \| \mathrm M_1g\|_{q'} \lesssim \|f\|_q\|g\|_{q'}.
\]
Bounds on $L^{q}$ for $C_{r}$ were
first proved in \cite{OSTTW}, where it is also shown that the
restriction $q>r'$ is necessary, whence no
sparse domination of the type occurring in Theorem \ref{thmmain} will
hold for $p<r'$. We can thus claim that Theorem \ref{thmmain} is sharp, short of the endpoint $p=r'$. In fact, sparse domination as in
\eqref{sparsedom} also entails $C_r:L^{p}(\R) \to L^{p,\infty}(\R)$. Such an estimate is currently unknown for $p=r'$.

However, Theorem \ref{thmmain} yields much more precise information
than mere $L^q$-boundedness. In particular, we obtain precisely
quantified weighted norm inequalities for $C_r$. Recall the definition
of the $A_t$ constant of a locally integrable nonnegative function $w$
as
\[
[w]_{A_t} := \begin{cases} \displaystyle
\sup_{I\subset \R} \, \l w \r_{I,1} \big\l w^{\frac{1}{1-t}}\big\r_{I,1}^{t-1} &
1<t<\infty \\ \inf\big\{A: \mathrm{M}w(x)\leq A w(x) \;\textrm{for a.e. }x\big\} &t=1\end{cases} 
\]
\begin{theorem} \label{thmweight} Let $2<r<\infty$ and $q>r'$ be fixed. Then
\begin{itemize}
\item[(i)] there exists  $K:[1,\frac{q}{r'})\to(0,\infty)$ nondecreasing such that
\[
\|C_{r}\|_{  L^q(\mathbb R;w) \to L^q(\mathbb R;w)} \leq K(t) [w]_{A_t}^{\max \left\{ 1, \frac{t}{q(t-1)} \right\}};
\]
\item[(ii)] there exists a positive increasing function $\mathcal Q$ such that for $t=\frac{q}{r'}$
\begin{equation}
\label{thmweight1}
\|C_{r}\|_{ L^q(\mathbb R;w) \to L^q(\mathbb R;w) } \leq \mathcal{Q}\left( [w]_{A_{t}} \right).
\end{equation}
\end{itemize}
\end{theorem}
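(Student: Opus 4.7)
The plan is to deduce both parts from Theorem \ref{thmmain} via the quantitative weighted theory of positive sparse bilinear forms. Set $\sigma = w^{1-q'}$; by duality, bounding $\|C_r\|_{L^q(w)\to L^q(w)}$ is equivalent to bounding $|\l C_r f, g\r|$ by a multiple of $\|f\|_{L^q(w)} \|g\|_{L^{q'}(\sigma)}$ for test functions $f,g$. Choosing an auxiliary $p \in (r', q)$ and invoking Theorem \ref{thmmain}, the task reduces to proving the sparse-form estimate
\[
\sum_{I \in \mathcal{S}} |I| \l f \r_{I,p} \l g \r_{I,1} \lesssim_{p,\eta} [w]_{A_{q/p}}^{\alpha(p,q)} \|f\|_{L^q(w)} \|g\|_{L^{q'}(\sigma)},
\]
uniformly over $\eta$-sparse collections $\mathcal{S}$, with sharp exponent $\alpha(p,q) = \max\{1,\, 1/(q-p)\}$.

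This sparse-form weighted bound is the $L^p$-average extension of the Hyt\"onen-P\'erez estimate for Calder\'on-Zygmund sparse forms (the $p=1$ case). It is established by a Lacey-style principal-interval decomposition of $\mathcal{S}$ driven by a double stopping time on the weighted averages $w(I)^{-1}\int_I |f|^p w$ and $\sigma(I)^{-1}\int_I |g|\sigma$, followed by geometric summation inside each principal tree and a final application of H\"older's inequality. Part (i) then follows: given $t \in (1, q/r')$, set $p = q/t \in (r', q)$, so that $q - p = q(t-1)/t$ and $\alpha(p,q) = \max\{1,\, t/(q(t-1))\}$ matches the exponent asserted in the theorem. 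Monotonicity of $K(t)$ is enforced by taking running maxima, and the case $t = 1$ is handled by picking any $p \in (r', q)$ and using the monotonicity $[w]_{A_{q/p}} \leq [w]_{A_1}$.

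Part (ii) corresponds to the endpoint $t = q/r'$, where the formal choice $p = q/t = r'$ is forbidden because Theorem \ref{thmmain} strictly requires $p > r'$. The remedy is the Coifman-Fefferman openness of the Muckenhoupt class: if $[w]_{A_t} \leq M$, there exists $\varepsilon = \varepsilon(M) > 0$ such that $w \in A_{t-\varepsilon}$ with $[w]_{A_{t-\varepsilon}}$ controlled by a function of $M$. Applying part (i) at the exponent $t - \varepsilon \in (1, q/r')$ produces the desired bound with $\mathcal{Q}([w]_{A_t})$ increasing in $[w]_{A_t}$. The main technical obstacle in this programme is the quantitative asymmetric sparse-form inequality: while its $p=1$ avatar is textbook, the $L^p$-$L^1$ setting requires matching a sharp $L^{q/p}(w)$ maximal-function bound on the $f$-side with an $L^{q'}(\sigma)$ bound on the $g$-side, and extracting the optimal $A_{q/p}$ characteristic depends on careful balancing of the two parallel stopping times.
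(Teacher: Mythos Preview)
Your proposal is correct and follows exactly the route the paper indicates: the paper omits the proof entirely, stating that the deduction of Theorem~\ref{thmweight} from Theorem~\ref{thmmain} ``follows along lines analogous to the proofs of \cite[Theorem 3]{CuDPOu} and \cite[Theorem 17.1]{LerNaz2015}'', which is precisely the weighted sparse-form machinery you outline for part (i), together with the $A_t$-openness argument for part (ii). One minor simplification: rather than a double stopping-time argument, the asymmetric sparse-form bound you need follows by the pointwise identity $\mathcal A_p f = (\mathcal A_1(|f|^p))^{1/p}$ and the sharp $A_{q/p}$ bound for the standard sparse operator $\mathcal A_1$ on $L^{q/p}(w)$, which in fact yields the slightly better exponent $\max\{1/p,\,1/(q-p)\}\leq \max\{1,\,1/(q-p)\}$.
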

We omit the standard deduction of Theorem \ref{thmweight} from Theorem \ref{thmmain}, which follows along lines analogous to the proofs of \cite[Theorem 3]{CuDPOu} and \cite[Theorem 17.1]{LerNaz2015}. Estimate (i) of Theorem \ref{thmweight} yields in particular that 
\[
w \in A_t \implies \|C_{r}\|_{ L^q(\mathbb R;w) \to L^q(\mathbb R;w) } <\infty \qquad \forall r> \max \left\{2,{\textstyle \frac{q}{q-t}}\right\}
\]
an improvement over \cite[Theorem 1.2]{DoLac12F}, where $L^q(\mathbb R;w)$ boundedness is only shown for variation exponents $r> \max \left\{2t,{\textstyle \frac{qt}{q-t}}\right\}$ when $w \in A_t$. Fixing $r$ instead, part (ii) of Theorem \ref{thmweight} is sharp in the sense that  $t=\frac{q}{r'}$ is the largest exponent such that an estimate of the type of \eqref{thmweight1} is allowed to hold. Indeed, if \eqref{thmweight1} were true for any $q=q_0\in (r',\infty)$ and  some $t=\frac{q_0}{s}$ with $s<r'$, a version of the Rubio de Francia extrapolation theorem (see for instance \cite[Theorem 3.9]{CruzMartellPerez})  would yield that $C_{r}$ maps $L^q$ into itself for all $q\in (s,\infty)$, contradicting the already mentioned counterexample from \cite{OSTTW}.

We turn to further comments on the proof and on the structure of the
paper. In the upcoming Section \ref{secwp} we reduce the bilinear form
estimate \eqref{sparsedom} to an analogous statement for a bilinear
form involving integrals over the upper-three space of symmetry
parameters for the Carleson operator of a wave packet transforms of
$f$ and a variational-truncated wave packet transform of $g$. The
natural framework for $L^p$-boundedness of such forms, the
$L^p$-theory of outer measures, has been developed by the second
author and Thiele in \cite{DoThiele15}. In Section \ref{secloc}, we
recall the basics of this theory as well as the localized Carleson
embeddings of \cite{DPOu2} and \cite{Ur15}. These will come to
fruition in Section \ref{secpf1}, where we give the proof of Theorem
\ref{thmmain}. A significant challenge in the course of the proof  is the treatment of the nonlocal (tail) components, which are handled  via novel \emph{ad-hoc} embedding theorems incorporating the fast decay of the wave packet coefficients away from the support of the input functions.

\subsection*{Acknowledgments}

This work was initiated and continued during G.\
Uraltsev's visit to the Brown University and University of Virginia Mathematics Departments, whose
hospitality is gratefully acknowledged. The authors would like to
thank Amalia Culiuc, Michael Lacey, Ben Krause and Yumeng Ou for
useful conversations about sparse domination principles.

\section{Reduction to wave packet transforms} \label{secwp}

In this section we reduce the inequality \eqref{sparsedom} to an
analogous statement involving wave packet transforms. Throughout this
section, the variation exponent $r\in (2,\infty)$ is fixed, and we
take $f,g\in \mathcal C^\infty_0(\R)$. 
 First of all we linearize the variation norm appearing in
$C_r$. Begin by observing that the map
\[
(x,\xi) \mapsto \int_{-\infty}^\xi \widehat f(\zeta) \, \e^{ix \zeta} \d  \zeta
\]
 is uniformly continuous. By
duality and standard considerations  
\[
C_r f(x) = \sup_{N}\;\sup_{\Xi \subset \R, \#\Xi\leq N} \;\sup_{\|\{a_j\}\|_{\ell^{r'}}\leq 1}\;  \sum_{j=1}^N a_j\; 
 \int_{\xi_{j-1}}^{\xi_j} \widehat f(\zeta) \, \e^{ix \zeta} \d  \zeta.
\]
Therefore,  \eqref{sparsedom} 
will be a consequence of the estimate  
\begin{equation}
\label{sparsedom1}
\begin{split}
\Lambda_{\vec \xi,\vec a }(f,g)&:=\int_\R g(x)\left(\sum_{j=1}^N a_j(x)   \int_{\xi_{j-1}(x)}^{\xi_j(x)} \widehat f(\zeta) \, \e^{ix \zeta}\, \d  \zeta\right)\d x  \leq
K(p) \sum_{I \in \mathcal S} |I| \l f\r_{I,p}  \l g\r_{I,1}
\end{split}
\end{equation}
with right hand side independent of $N\in \mathbb N$, $\Xi \subset \R, \#\Xi\leq N$, and of the   measurable $\Xi^{N+1}$-valued function $\vec \xi(x)=\{\xi_j(x)\}$ with $ \xi_0(x)<\cdots<\xi_N(x)$, and $\mathbb C^{N+1}$-valued $\vec a(x)=\{a_j(x)\}$ with $\|\vec a(x)\|_{\ell^{r'}}=1$.

The next step is to  uniformly dominate the form
$\Lambda_{\vec \xi,\vec a }(f,g)$ by an outer form involving wave
packet transforms of $f$ and $g$; in the terminology of
\cite{DoThiele15}, \emph{embedding} maps into the upper 3-space
\[(u,t,\eta) \in \mathbb X=\R\times \R_{+}\times \R.\] The parameters
$\xi, \vec a$ will enter the definition of the embedding map for
$g$. We introduce the wave packets
\[  \psi_{t,\eta}(x):=t^{-1}\e^{i\eta
    z}\,\psi\left(\frac{x}{t}\right), \qquad \eta\in \R,\; t\in (0,\infty) 
\]
where $\psi$ is a real valued, even Schwartz function with frequency
support of width $b$ containing the origin. The wave packet transform
of $f$ is thus defined, as in \cite{DoThiele15}, by
\begin{equation}
\label{wpt}
F(f)(u,t,\eta) = \big|f*\psi_{t,\eta} (u)\big|, \qquad (u,t,\eta) \in \mathbb X.
\end{equation}
For our fixed choice of $\vec \xi,\vec a$ we introduce the modified
wave packet transform of $g$ that is dual to \eqref{wpt} for the sake
of bounding the left hand side of \eqref{sparsedom1}. Following
\cite[Eq. (1.14)]{Ur15}, it is given by
\begin{equation}
\label{wptm}
     A(g)(u,t,\eta) := \sup_{\Psi} \left|
    \int_{\R}g(x)\sum_{j=1}^N
    a_{j}(x) \Psi_{t,\eta}^{\xi_{j}(x),\xi_{j+1}(x)}(x-u)\, \d x\right|, \qquad (u,t,\eta) \in \mathbb X,
\end{equation}
with supremum being taken over all choices of \emph{truncated wave
  packets} $\Psi_{t,\eta}^{\xi_{-},\,\xi_{+}}$, that for each
$t,\eta\in\R_{+}\times \R$ are functions in $\mathcal S(\R)$ parameterized by $\xi_{-},\xi_{+}\in\Xi$.  We summarize the
basic defining properties of the truncated wave packets in Remark \ref{twprem} below, and we
refer to \cite{Ur15} for a precise definition.

The duality of the embeddings \eqref{wpt} and \eqref{wptm} is a
consequence of  the following wave packet domination Lemma. We send to \cite{Ur15} for the proof.
\begin{lemma}[Wave packet domination]\label{lem:wave-packet-domination}
  Let $f,g$, $\Xi,\vec \xi,\vec a$ be as above and consider the
  bilinear form defined on the wave packets transforms, given by
  \begin{equation}
    \label{defB}
     \B_{\vec \xi,\vec a }(f,g):=\int_{\mathbb X}   F(f)(u,t,\eta)A(g)(u,t,\eta) \, \d u \d t \d \eta.
   \end{equation}
   Then
   \[
     \Lambda_{\vec \xi,\vec a }(f,g)\lesssim     \B_{\vec \xi,\vec a }\big(F(f),A(g)\big)
   \]
  with uniform implied constant. 
\end{lemma}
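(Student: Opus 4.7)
The plan is to invoke a Calderón-type reproducing formula writing
\[
f = c \int_{\mathbb X} (f * \psi_{t,\eta})(u)\, \widetilde\psi_{t,\eta}(\,\cdot\, - u)\, \d u\, \d t\, \d \eta,
\]
where $\widetilde\psi$ is a suitable dual wave packet (since $\psi$ is real and even, $\widetilde\psi$ shares the structural properties of $\psi$ up to a sign flip in $\eta$). Substituting this identity into $\Lambda_{\vec\xi,\vec a}(f,g)$ and swapping the order of integration, justified by rapid decay of the wave packet transform and the Schwartz regularity of $f$ and $g$, the linear frequency-sampling action on the dual wave packet collapses: each summand becomes
\[
\int_{\xi_{j-1}(x)}^{\xi_j(x)} \widehat{\widetilde\psi_{t,\eta}(\,\cdot\,-u)}(\zeta)\, \e^{ix\zeta}\, \d \zeta
= \Psi_{t,\eta}^{\xi_{j-1}(x),\xi_j(x)}(x-u),
\]
where $\Psi^{\xi_-,\xi_+}_{t,\eta}$ is the inverse Fourier transform of $\widehat{\widetilde\psi_{t,\eta}}$ sharply restricted to $[\xi_-,\xi_+]$.

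The main point, and the principal obstacle, is to recognize each such $\Psi_{t,\eta}^{\xi_-,\xi_+}$ as an admissible truncated wave packet in the sense over which the supremum defining $A(g)$ in \eqref{wptm} is taken. Since $\widehat{\widetilde\psi_{t,\eta}}$ is a smooth bump adapted to the frequency window of width $\sim b/t$ centered at $\eta$, its sharp Fourier restriction to an arbitrary subinterval $[\xi_-,\xi_+]$ still enjoys uniform size, smoothness, and spatial-decay bounds at the scale prescribed by $(t,\eta)$. This is precisely the \emph{raison d'\^etre} of the truncated wave packet class of \cite{Ur15}: it is engineered to absorb sharp frequency cutoffs while retaining full phase-space localization. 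Verifying this admissibility uniformly in $\xi_-<\xi_+$ amounts to a calculus of smooth bump functions, and we refer to \cite{Ur15} for the detailed checking.

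Once the identification is in place, taking absolute values inside the triple integral produces
\[
|\Lambda_{\vec\xi,\vec a}(f,g)| \lesssim \int_{\mathbb X} |f * \psi_{t,\eta}(u)| \cdot \left|\int_{\R} g(x) \sum_{j=1}^N a_j(x) \Psi^{\xi_{j-1}(x),\xi_j(x)}_{t,\eta}(x-u)\, \d x\right| \d u\, \d t\, \d \eta.
\]
The inner $x$-integral is bounded pointwise in $(u,t,\eta)$ by $A(g)(u,t,\eta)$, by the admissibility of $\Psi^{\xi_-,\xi_+}_{t,\eta}$ and the definition \eqref{wptm}, while the outer factor equals $F(f)(u,t,\eta)$. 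This gives the claimed domination by $\B_{\vec\xi,\vec a}\bigl(F(f), A(g)\bigr)$ with absolute implied constant, completing the proof.
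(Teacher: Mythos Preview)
Your argument has a genuine gap at the step where you identify the sharply frequency-restricted packet $\Psi_{t,\eta}^{\xi_-,\xi_+}$ (the inverse Fourier transform of $\widehat{\widetilde\psi_{t,\eta}}\cdot 1_{[\xi_-,\xi_+]}$) as an admissible truncated wave packet. When one of $\xi_-,\xi_+$ falls inside the frequency support of $\widetilde\psi_{t,\eta}$, the restricted Fourier transform acquires a jump discontinuity, and the resulting function decays only like $|x|^{-1}$ on the spatial side. This violates the requirement in Remark~\ref{twprem} that $\e^{-i\eta t z}\, t\,\Psi_{t,\eta}^{\xi_-,\xi_+}(tx)$ be uniformly bounded in $\mathcal S(\R)$. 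The truncated wave packet class is \emph{not} designed to absorb sharp cutoffs in this sense; quite the opposite, its members are genuinely Schwartz. Moreover, your $\Psi$ fails the support condition \eqref{onesummand}, which localizes $t(\eta-\xi_-)$ to a fixed thin window $(d-\epsilon,d+\epsilon)$: your sharp cutoff is nonzero on a far larger $(t,\eta)$-region, and this localization is essential for the embedding theorem behind $A(g)$.

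The paper's route avoids this by decomposing the multiplier itself rather than $f$. One writes $1_{(\xi_-,\xi_+)}(\zeta)=\int_{\R_+\times\R}\widehat{\Psi}_{t,\eta}^{\xi_-,\xi_+}(\zeta)\,\d t\,\d\eta$ via a smooth Littlewood--Paley resolution anchored at the endpoints $\xi_\pm$: at scale $t$ one places a smooth bump $\widehat\psi(t(\zeta-\eta))\chi(t(\eta-\xi_-))$ with $\eta$ at distance $\sim d/t$ from $\xi_-$ (and symmetrically near $\xi_+$), which is exactly what forces \eqref{onesummand} and keeps every $\Psi_{t,\eta}^{\xi_-,\xi_+}$ Schwartz. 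One then inserts a reproducing packet $\phi$ with $\widehat\phi_{t,\eta}\equiv 1$ on the support of each $\widehat\Psi_{t,\eta}^{\xi_-,\xi_+}$ to produce the factor $f*\phi_{t,\eta}(u)$, after which Fubini and the triangle inequality give the domination. The missing idea in your proposal is precisely this smooth endpoint-centered decomposition of the indicator; a naive Calder\'on formula for $f$ followed by a hard cutoff cannot substitute for it.
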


Using the above Lemma, we see that inequality  \eqref{sparsedom1}  and 
thus  Theorem \ref{thmmain} will follow from the bounds of the next
proposition.

\begin{proposition} \label{propmain} Let $p>r'$ be fixed.  For all
  $f,g\in L^\infty(\R)$ and compactly supported  there exists a sparse collection
  $\mathcal S= \mathcal S(f,g,p)$ and an absolute constant $K=K(p)$
  such that
\begin{equation}
\label{outer-sparsedom}
\sup_{N}\;\sup_{ \#\Xi\leq N}\; \sup_{\vec \xi, \vec a}\;
\B_{\vec \xi,\vec a }(f,g) \leq K(p) \sum_{I \in \mathcal S} |I| \l f\r_{I,p}  \l g\r_{I,1}
\end{equation}
where $\vec \xi, \vec a $ range over $\Xi^{N+1},\mathbb C^{N+1}$-valued functions as above.
\end{proposition}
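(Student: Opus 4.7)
The plan is to construct the sparse collection $\mathcal S$ by a stopping-time recursion that operates directly on intervals (with no underlying dyadic grid), following the overall template of \cite{CuDPOu} but tailored to the wave packet representation \eqref{defB}. The per-level estimates will be supplied by the localized outer Carleson embeddings of \cite{DPOu2,Ur15} together with the outer H\"older inequality of \cite{DoThiele15}, which are invoked in a way that is uniform in the symmetry parameters $N,\Xi,\vec\xi,\vec a$.

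As a preliminary step I would isolate a single principal interval $I_0$ containing a large dilate of the joint support of $f$ and $g$; by the Schwartz decay of the wave packet $\psi$, the portion of $\B_{\vec \xi,\vec a}(f,g)$ coming from $\mathbb X$ outside the tent over $I_0$ can be summed geometrically and contributes at worst a single outer term in the sparse sum.

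The recursion then proceeds as follows. Given a current interval $I$, form a stopping family $\mathcal F(I)$ consisting of maximal subintervals $J\subset I$ on which either $\langle f\rangle_{J,p}> C \langle f\rangle_{I,p}$ or $\langle g\rangle_{J,1}> C \langle g\rangle_{I,1}$, with $C=C(p)$ chosen large. The weak-type bounds for the Hardy--Littlewood maximal operator give $\big|\bigcup \mathcal F(I)\big| \leq (1-\eta)|I|$ for some $\eta=\eta(C,p)\in(0,1)$, which ultimately yields $\eta$-sparsity of $\mathcal S$ via the standard choice $E_I:=I\setminus\bigcup_{J\in \mathcal F(I)}J$. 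On the relative complement of the tent $T(I)\setminus \bigcup_{J\in\mathcal F(I)}T(J)$, by construction the relevant outer sizes of $F(f)$ and $A(g)$ are controlled, through the localized embeddings of \cite{DPOu2,Ur15}, by $\langle f\rangle_{I,p}$ and $\langle g\rangle_{I,1}$ respectively; outer H\"older then bounds the corresponding portion of $\B_{\vec\xi,\vec a}(f,g)$ by $K(p)|I|\langle f\rangle_{I,p}\langle g\rangle_{I,1}$. Adding $I$ to $\mathcal S$ and iterating the procedure on each $J\in\mathcal F(I)$ produces the sparse collection, and the per-level bounds telescope into \eqref{outer-sparsedom}.

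The main obstacle is the treatment of the nonlocal (tail) contributions, as already flagged in the introduction. Because $\psi_{t,\eta}$ has full spatial support, $F(f)$ and $A(g)$ evaluated over $T(I)$ receive contributions from the values of $f$ and $g$ outside $I$, and the off-the-shelf localized embeddings do not directly dominate those tails by the interior averages $\langle f\rangle_{I,p},\langle g\rangle_{I,1}$. To handle this I would split $f=f\cic{1}_{CI}+f\cic{1}_{\R\setminus CI}$ and analogously for $g$, apply the standard embedding to the local piece, and prove ad-hoc embedding estimates for the tails that convert the Schwartz decay of $\psi$ off the support of $f$ (respectively $g$) into a geometric gain in the dyadic distance from $I$. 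The resulting tail contributions must then be summed so that each far-away piece is charged to a single ancestor in $\mathcal S$ while still yielding the clean bound $K(p)|I|\langle f\rangle_{I,p}\langle g\rangle_{I,1}$ per level. Integrating these auxiliary embeddings into the outer H\"older framework, without losing the uniformity in $\vec \xi,\vec a$ essential to \eqref{outer-sparsedom}, is the most delicate point of the argument.
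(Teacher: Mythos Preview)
Your overall plan matches the paper's proof closely: a non-dyadic stopping-time recursion, localized embeddings combined with outer H\"older for the principal part, and ad-hoc tail embeddings exploiting Schwartz decay for the nonlocal pieces. However, there is one genuine gap in the construction of $\mathcal F(I)$.

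You define $\mathcal F(I)$ solely through the maximal-function stopping conditions $\langle f\rangle_{J,p}>C\langle f\rangle_{I,p}$ or $\langle g\rangle_{J,1}>C\langle g\rangle_{I,1}$, and then assert that the localized embeddings of \cite{DPOu2,Ur15} control the outer norms of $F(f)$ and $A(g)$ on $\mathsf T(I)\setminus\bigcup_{J\in\mathcal F(I)}\mathsf T(J)$. But those embeddings (Propositions~\ref{FY} and~\ref{GU}) do not hold on an arbitrary complement: they furnish their \emph{own} exceptional sets $U_{f,p,Q}$ and $V_{g,1,Q}$, produced by weak-type bounds for the embedding maps $f\mapsto F(f)$ and $g\mapsto A(g)$ themselves, and only guarantee \eqref{energy-embedding-bound}--\eqref{mass-embedding-bound} on $\mathsf T(Q)\setminus\mathsf T(U_{f,p,Q})$, respectively $\mathsf T(Q)\setminus\mathsf T(V_{g,1,Q})$. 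These sets are a priori unrelated to the super-level sets of $\mathrm M_p f$ and $\mathrm M_1 g$. The paper's remedy is simply to enlarge the stopping set: $E_Q$ is taken to be the union of $U_{f,p,Q}$, $V_{g,1,Q}$, \emph{and} the two maximal-function level sets, and $\mathcal I_Q$ consists of the connected components of $E_Q\cap Q$. The embedding exceptional sets make the outer-$L^\sigma$ and outer-$L^\tau$ bounds applicable on the good region; the maximal-function level sets are what yield $\inf_{3I}\mathrm M_p f\lesssim\langle f\rangle_{3Q,p}$ and $\inf_{3I}\mathrm M_1 g\lesssim\langle g\rangle_{3Q,1}$, which you correctly need for the tails. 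Both ingredients are necessary and neither subsumes the other.

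Your identification of the tail terms as the delicate point, and your plan to split into near and far pieces and exploit Schwartz decay, is exactly what the paper does (Lemma~\ref{taillemma}); the summation there is carried out by covering $\mathsf T(I)$ with Carleson boxes over dyadic subintervals of $I$ and summing the geometric decay in scale. One minor point you do not address is termination of the recursion: the paper handles this by an $\eps$-truncation in the $t$-variable so that the iteration stops after finitely many generations.
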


 We now make a brief digression to
justify definitions \eqref{wpt} and \eqref{wptm} of the
wave packet transforms and the result of Lemma
\ref{lem:wave-packet-domination}. Consider the term
\[  
  \int_{\xi_{j-1}(x)}^{\xi_{j}(x)}\widehat{f}(\eta)e^{ix\eta}\d
  \eta \]
appearing in \eqref{sparsedom1} and let us think for a moment of
$\xi_{j-1}(x)=\xi_{-}$ and $\xi_{j}(x)=\xi_{+}$ as frozen. Then the following
representation holds for the multiplier
$1_{(\xi_{-},\xi_{+})}(\zeta)$:
\begin{align}\label{eq:multiplier-representation}
  1_{(\xi_{-},\xi_{+})}(\zeta) = \int_{\R_{+}\times\R}
  \widehat{\Psi}_{t,\eta}^{\xi_{-},\xi_{+}}(\zeta)\, \d t \d \eta 
\end{align}
where $\Psi_{t,\eta}^{\xi_{-},\xi_{+}}$ are truncated wave packets.
Choosing a $\phi\in \mathcal S(\R)$ such that
$\widehat{\phi}_{t,\eta}(\zeta)=1$ whenever
$\widehat{\Psi}_{t,\eta}^{\xi_{-},\xi_{+}}(\zeta) \neq 0$ for any
$\xi_{-}<\xi_{+}\in\R$, we obtain the pointwise identity
\[
  \int_{\xi_{j-1}(x)}^{\xi_{j}(x)}\widehat{f}(\zeta)e^{ix\zeta}\d \zeta  =\int_{\mathbb X}
  f*\phi_{t,\eta} (u) \Psi_{t,\eta}^{\xi_{j-1}(x),\xi_{j}(x)}(x-u) \d u \d t \d \eta.
\]
The results of Lemma \ref{lem:wave-packet-domination} follow by Fubini
and the triangle inequality.

We briefly illustrate identity \eqref{eq:multiplier-representation},
for a more careful discussion we refer to \cite[Sec. 3]{Ur15}. Start
by choosing $\psi\in \mathcal S(\R)$ with $\widehat{\psi}$ non-negative and
supported on a ball of radius $b/2$, and let
$\chi\in \mathcal{C}^{\infty}_{0}(\R)$ be a non-negative bump function supported
on $[d-\epsilon,d+\epsilon]$ with $d>b$ and $\epsilon\ll b$.   Suppose 
formally that $\xi_{+}=+\infty$ so that, up to a
suitable normalization of $\chi$, a Littlewood-Paley type
decomposition centered at $\xi_{-}$ of the multiplier
$1_{(\xi_{-},+\infty)}$ gives
\begin{align*}
  1_{(\xi_{-},+\infty)}(\zeta) = \int_{\R_{+} \times \R} \widehat
  \psi(t(\zeta-\eta)) \chi(t(\eta-\xi_{-})) \d t \d \eta.
\end{align*}
A similar  expression holds if $\xi_{-}=-\infty$ and $\xi_{+}\in
\R$. We choose truncated wave packets so that 
\begin{align*}
  &\Psi_{t,\eta}^{\xi_{-},\xi_{+}}(x):=
    \psi_{t,\eta}(x)\chi(t(\eta-\xi_{-}))&&t(\eta-\xi_{-})\ll
                                       t(\xi_{+}-\eta)\\
  &\Psi_{t,\eta}^{\xi_{-},\xi_{+}}(x):=
    \psi_{t,\eta}(x)\chi(t(\xi_{+}-\eta))&&t(\eta-\xi_{-})\gg
                                       t(\xi_{+}-\eta)\\
  &\Psi_{t,\eta}^{\xi_{-},\xi_{+}}(x):=0 &&\eta\notin(\xi_{-},\xi_{+}).
\end{align*}
Finally if $t(\eta-\xi_{-})\approx t (\xi_{+}-\eta)$ then
$\Psi_{t,\eta}^{\xi_{-},\xi_{+}}$ is chosen to appropriately model the
transition between the above regimes and justifies identity
\eqref{eq:multiplier-representation}.  
\begin{remark}
\label{twprem}In general we call a function
$\Psi_{t,\eta}^{\xi_{-},\xi_{+}}\in \mathcal S(\R)$ parameterized by
$\xi_{-}<\xi_{+}\in\R$ a   truncated   wave-packet adapted to
$t,\eta\in\R_{+}\times\R$ if 
\begin{align*}
  &e^{-i\eta t z } t \Psi_{t,\eta}^{\xi_{-},\xi_{+}}(tx),
&  &t^{-1}\partial_{\xi_{-}}\Big(e^{-i\eta t z } t \Psi_{t,\eta}^{\xi_{-},\xi_{+}}(tx)\Big),&
  &t^{-1}\partial_{\xi_{+}}\Big(e^{-i\eta t z } t \Psi_{t,\eta}^{\xi_{-},\xi_{+}}(tx)\Big)
\end{align*}
are uniformly bounded in $\mathcal S(\R)$ as functions of $x$. Furthermore we require that
$\widehat \Psi_{t,\eta}^{\xi_{-},\xi_{+}}$ be supported on
$(\eta-t^{-1}b,\,\eta+t^{-1}b)$ for some
$b>0$. Finally, for some constants $d,d',d''>0$ and $\epsilon>0$ it
must hold that
\begin{align}
& \label{onesummand}  \Psi_{t,\eta}^{\xi_{-},\xi_{+}}\neq 0&&\text{only if }\left\{
                                       \begin{aligned}
                                         &t(\eta-\xi_{-})\in
                                         (d-\epsilon,d+\epsilon)\\
                                         & t(\xi_{+}-\eta)>d'>0
                                       \end{aligned}
                                           \right.
  \\
  &\partial_{\xi_{+}} \Psi_{t,\eta}^{\xi_{-},\xi_{+}} =0 &&\text{if } t(\xi_{+}-\eta)>d''>d'>0. \nonumber
\end{align}
\end{remark}

\section{Localized outer-$L^p$ embeddings} \label{secloc}

We now turn to the description of the analytic tools which are relied
upon in the proof of estimate \eqref{sparsedom}. We work in the
framework of outer measure spaces \cite{DoThiele15}, see also
\cite{CuDPOu,DPOu2}. In particular, we define a distinguished
collection of subsets of the upper 3-space $\mathbb X$ which we refer
to as \emph{tents} above the time-frequency loci $(I,\xi)$ where $I$
is an interval of center $c(I) $ and length $|I|$, and $\xi \in \R$:
\begin{align*} \label{eq:def:tents}
  &\mathsf{T}(I,\xi) := \mathsf{T}^{\ell}(I,\xi ) \cup \mathsf{T}^{o}(I,\xi),
  \\
  & \mathsf{T}^{o}(I,\xi )  := \left\{ (u,t,\eta) : t\eta-t\xi \in
    \Theta^{o}, t<|I|, |u-c(I)|<|I|-t\right\} \\
  &\mathsf{T}^{\ell}(I,\xi )  := \left\{ (u,t,\eta) : t\eta-t\xi \in
   \Theta\setminus\Theta^{o}, t<|I|, |u-c(I)|<|I|-t \right\}
\end{align*}
where
$\Theta^{o}=[\beta^{-},\beta^{+}],\,\Theta=[\alpha^{-},\alpha^{+}]$
are two geometric parameter intervals such that
$0\in \Theta^{o}\subset\Theta$. The specific values of the parameters
do not matter. What is important that given the geometric parameters
of the wave packets appearing in \eqref{wpt} and \eqref{wptm} there
exists a choice of parameters of the tents such that the statements of
the subsequent discussion hold. For example it must hold that
$(-b,b)\subset\Theta^{o}$ where $b$ is the parameter that governs the
frequency support of $\psi_{t,\eta}$ and
$\Psi_{t,\eta}^{\xi_{-},\xi_{+}}$. For a complete discussion see
\cite[Sec. 2]{Ur15}.
 As
usual, we denote by $\mu$ the outer measure generated by countable
coverings by tents $\mathsf{T}(I,\xi), I\subset \R, \xi \in \R$ via
the pre-measure $ \mathsf{T}(I,\xi)\mapsto|I|$.

Let $\mathsf{s}$ be a size \cite{DoThiele15}, i.e.\  a family of
quasi-norms indexed by tents $\mathsf{T}$, defined on Borel functions
$F:\mathbb X\to \mathbb C$. The corresponding outer-$L^p$ space on
$(\mathbb X,\mu)$ is defined by the quasi-norm
\[
  \begin{split}&\|F\|_{L^{p}(\mathsf{s}) }:= \left(p\int_0^\infty
      \lambda^{p-1} \mu(\mathsf{s}(F)>\lambda) \, \d
      \lambda\right)^{\frac1p},\qquad 0<p<\infty, \\ &
    \mu(\mathsf{s}(F)>\lambda) :=\inf\Big\{ \mu(
    E): E\subset \mathbb X, \; \sup_{\mathsf{T}}
    \mathsf{s}\left(F\cic{1}_{\mathbb X\setminus E}\right)(\mathsf{T})\leq
    \lambda\Big\}
\end{split}
\]
where the supremum  on the right is taken over all tents $\mathsf{T}=\mathsf{T}(I,\xi)$.
We will work with outer $L^p$ spaces based on the sizes 
\[
\begin{split}
   \mathsf{s}^{e}(F)(\mathsf{T} )& := \; \left( \frac{1}{|I|}
      \int_{\mathsf{T}^\ell}| F(u,t,\eta) |^2 \,\d u \d t \d \eta  \right)^{\frac{1}{2}} 
    + \sup_{(u,t,\eta)\in \mathsf T} |F (u,t,\eta)|,\\
    \mathsf{s}^{m}(A)(\mathsf{T} )& :=    \left( \frac{1}{|I|}
      \int_{\mathsf{T}}| A(u,t,\eta) |^2 \,\d u \d t \d \eta  \right)^{\frac{1}{2}} 
    +   \frac{1}{|I|}
      \int_{\mathsf{T}^o}| A(u,t,\eta) |  \,\d u \d t \d \eta  
\end{split}
\]
that are related to the two embeddings \eqref{wpt} and \eqref{wptm}
respectively. The dual relation  of the sizes $\mathsf{s}^{e}, \mathsf{s}^{m}$
is given by the fact that for any  two Borel functions $F,A:\mathbb
X\to \mathbb C$ there holds 
\[ \int_{\mathsf{T}} |F(u,t,\eta)  A(u,t,\eta)|\, \d u \d t \d \eta  \leq 2\mathsf{s}^\ell(F )(\mathsf{T} )\mathsf{s}^o(A)(\mathsf{T} ).\]
The abstract \emph{outer H\"older inequality} \cite[Prop.
3.4]{DoThiele15} and Radon-Nikodym type bounds
\cite[Prop. 3.6]{DoThiele15} yield 
\begin{equation}
\label{outerholder}
\int_{\mathsf{T}}| F(u,t,\eta)  A(u,t,\eta)|\, \d u \d t \d \eta \lesssim \|F\|_{L^{\sigma }(\mathsf{s}^\ell)} \|A\|_{L^{\tau}(\mathsf{s}^o)}
\end{equation}    
whenever $1\leq\sigma,\tau \leq\infty$ are  H\"older dual exponents
i.e. $\frac{1}{\sigma}+\frac{1}{\tau}=1$.

The nature of the wave packet transforms
$f\mapsto F(f), g\mapsto A(g)$ defined by \eqref{wpt}, \eqref{wptm} is
heavily exploited in the stopping-type outer $L^p$-embedding theorems
below.  We state the embedding theorems after some necessary
definitions.
  It is convenient to use the notation
\[
\mathsf{T}(I) :=\left\{ (u,t,\eta):t<|I|,\,
  |u-c(I)|<|I|-t,\,\eta\in\mathbb R\right\}
\]
for the set of the upper 3-space associated to the usual spatial tent
over $I$.
Given an open set $E\subset \R$ we associate to it the subset of $\mathsf{T}(
E) \subset \mathbb X$ given
by
\begin{equation}
\label{tent-over-set}
\mathsf{T}(E)  = \bigcup_{I\subset E} \mathsf{T}(I)
\end{equation}
where the union is taken over all \emph{intervals} $I\subset E$.



The first stopping embedding theorem, a reformulation of a result first obtained in \cite{DPOu2}, deals with the wave packet
transform $f\mapsto F(f)$ of \eqref{wpt}.
\begin{proposition}\label{FY}
  Let $1<p<2$, $\sigma\in (p',\infty)$, then there exists $K>0$ such
  that the following holds. For all
  $f\in L^{p}_{\mathrm{loc}}(\mathbb{R})$, all intervals $Q$, and all $c\in(0,1)$ there
  exists an open set ${U}_{f,p,Q}$ satisfying
 \begin{equation*}
  \left|{U}_{f,p,Q}\right| \leq  c |Q|,
\end{equation*}
such that 
  \begin{equation}\label{energy-embedding-bound}   \left\|F (f\cic{1}_{3Q})
      \cic{1}_{\mathsf{T}(Q)\setminus \mathsf{T}(U_{f,p,Q})}\right\|_{L^{\sigma}( \mathsf{s}^{e})}
    \leq K |Q|^{\frac1\sigma} \l f\r_{ 3Q,p}.
  \end{equation}
\end{proposition}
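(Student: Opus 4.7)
The plan is a Calder\'on--Zygmund-style reduction to the global outer-$L^{\sigma}$ embedding $\|F(h)\|_{L^{\sigma}(\mathsf{s}^{e})}\lesssim \|h\|_{\sigma}$, available for all $\sigma\in[2,\infty]$ by interpolation between the $L^{\infty}$ estimate built into the definition of $\mathsf{s}^{e}$ and the classical $L^{2}$ Carleson embedding for wave packets (see \cite{DoThiele15}). Since $\sigma>p'>2$, this global embedding is at my disposal.

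I would first fix $\lambda:=\l f\r_{3Q,p}$, choose a large constant $C_{0}=C_{0}(c)$, and set
\[ U_{f,p,Q}:=\{x\in\R:\mathrm{M}_{p}(f\cic{1}_{3Q})(x)>C_{0}\lambda\}. \]
The weak-$(p,p)$ bound for the Hardy--Littlewood maximal function yields $|U_{f,p,Q}|\lesssim C_{0}^{-p}|Q|$, so $C_{0}=C_{0}(c)$ enforces $|U_{f,p,Q}|\leq c|Q|$. Performing the associated $L^{p}$ Calder\'on--Zygmund splitting $f\cic{1}_{3Q}=g+\sum_{j}b_{j}$, each $b_{j}$ is supported on a connected component $Q_{j}$ of $U_{f,p,Q}$, has zero mean, and satisfies $\l b_{j}\r_{Q_{j},p}\lesssim \lambda$, while $\|g\|_{\infty}\lesssim\lambda$ and $\|g\|_{p}^{p}\lesssim \lambda^{p}|Q|$. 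For the good part, log-convexity $\|g\|_{\sigma}^{\sigma}\leq\|g\|_{\infty}^{\sigma-p}\|g\|_{p}^{p}\lesssim\lambda^{\sigma}|Q|$ combined with the global embedding gives $\|F(g)\|_{L^{\sigma}(\mathsf{s}^{e})}\lesssim \lambda|Q|^{1/\sigma}$.

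For the bad part I will exploit the geometric restriction to $\mathsf{T}(Q)\setminus\mathsf{T}(U_{f,p,Q})$: the critical observation is that if $(u,t,\eta)$ lies in this set then no $Q_{j}$ can simultaneously satisfy $t<|Q_{j}|$ and contain the spatial shadow of $(u,t,\eta)$ in $2Q_{j}$, for otherwise $(u,t,\eta)\in\mathsf{T}(Q_{j})\subset\mathsf{T}(U_{f,p,Q})$. Combined with $\int b_{j}=0$, Schwartz decay of $\psi$, and disjointness of $\{Q_{j}\}$, a standard telescoping about the center of each $Q_{j}$ produces
\[ |F(b)(u,t,\eta)|\lesssim \lambda \sum_{j}\frac{|Q_{j}|/t}{(1+\dist(u,Q_{j})/t)^{N}}\lesssim \lambda \]
uniformly on $\mathsf{T}(Q)\setminus\mathsf{T}(U_{f,p,Q})$, the last inequality via a geometric-series bound using disjointness. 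Hence $\mathsf{s}^{e}\bigl(F(b)\cic{1}_{\mathsf{T}(Q)\setminus\mathsf{T}(U_{f,p,Q})}\bigr)(\mathsf{T})\lesssim \lambda$ on every tent, and the outer $L^{\sigma}$ norm of the restricted $F(b)$ is bounded by $\lambda\,\mu(\mathsf{T}(Q))^{1/\sigma}\lesssim \lambda|Q|^{1/\sigma}$, which combined with the good-part bound yields \eqref{energy-embedding-bound}.

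The hardest step will be the airtight treatment of CZ cubes $Q_{j}$ with $|Q_{j}|\gtrsim t$, where the cancellation-based telescoping degrades. Here one must invoke the geometric constraint above to isolate the boundedly many such cubes (by disjointness, at most $O(1)$ large $Q_{j}$ can have a boundary within $t$ of any fixed $u$), then close the sum via the size estimate $\|b_{j}\|_{1}\lesssim \lambda|Q_{j}|$ coupled with the pointwise decay of $\psi_{t,\eta}$ away from such cubes.
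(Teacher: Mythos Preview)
The paper does not argue directly: it derives the proposition from the weak-type iterated outer embedding of \cite[Theorem~1.2]{Ur15}, taking $U_{f,p,Q}$ to be the union of the intervals that theorem outputs at level $\sim c\,\l f\r_{3Q,p}$. Your Calder\'on--Zygmund route is genuinely different, but as written it has a real gap.

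The fatal step is the passage from the pointwise bound $|F(b)(u,t,\eta)|\lesssim\lambda$ on $\mathsf{T}(Q)\setminus\mathsf{T}(U_{f,p,Q})$ to the size bound $\mathsf{s}^{e}\bigl(F(b)\cic{1}_{\cdots}\bigr)(\mathsf{T})\lesssim\lambda$. The size $\mathsf{s}^{e}$ has two pieces: the supremum over $\mathsf{T}$, which the pointwise bound does control, and the lacunary $L^{2}$-average $\bigl(|I|^{-1}\int_{\mathsf{T}^{\ell}}|F|^{2}\bigr)^{1/2}$. But $\mathsf{T}^{\ell}(I,\xi)$ has \emph{infinite} Lebesgue measure: for each $t$ the $\eta$-fiber has length $\sim t^{-1}$, so $\int_{\mathsf{T}^{\ell}}\d u\,\d t\,\d\eta\sim|I|\int_{0}^{|I|}t^{-1}\,\d t=\infty$, and a uniform pointwise bound on $F(b)$ says nothing about $\int_{\mathsf{T}^{\ell}}|F(b)|^{2}$. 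For the good part $g$ this integral is handled by the Carleson square-function estimate using $\|g\|_{\infty}\lesssim\lambda$; for the bad part $b\in L^{p}$, $p<2$, no such estimate is at hand, and your proposal supplies nothing in its place. Controlling this $L^{2}$ piece is exactly where the tree-selection machinery behind \cite{DPOu2,Ur15} enters.

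There is also a subsidiary problem upstream. The ``telescoping about $c(Q_{j})$'' does not yield the factor $|Q_{j}|/t$: the wave packet $\psi_{t,\eta}(x)=t^{-1}e^{i\eta x}\psi(x/t)$ is modulated, its derivative carries an uncontrolled factor $|\eta|$, and the mean-zero of $b_{j}$ gives no gain. The displayed bound $|F(b_{j})|\lesssim\lambda(|Q_{j}|/t)(1+\dist(u,Q_{j})/t)^{-N}$ \emph{does} follow trivially from $\|b_{j}\|_{1}\lesssim\lambda|Q_{j}|$ and Schwartz decay, but then the geometric sum $\sum_{j}(|Q_{j}|/t)(1+\dist/t)^{-N}$ is not $O(1)$ when some $Q_{j}$ with $|Q_{j}|\gg t$ has boundary within $O(t)$ of $u$: that single term is $\sim|Q_{j}|/t$, and your final paragraph's fix reuses exactly this crude bound, so it is circular. (The pointwise estimate $|F(b)|\lesssim\lambda$ can be rescued differently, via $F(b)\le F(f\cic{1}_{3Q})+F(g)$ and the observation that off $\mathsf{T}(U_{f,p,Q})$ one has $F(f\cic{1}_{3Q})(u,t,\eta)\lesssim\inf_{|x_{0}-u|\lesssim t}\mathrm{M}_{p}(f\cic{1}_{3Q})(x_{0})\lesssim\lambda$; but, per the previous paragraph, the pointwise bound alone does not close the argument.)
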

The embedding theorem we use to treat the variationally truncated wave
packet transform $g\mapsto A(g)$ of \eqref{wptm} stems from the main result
of \cite{Ur15}.
\begin{proposition} \label{GU} Let $\tau\in (r',\infty)$, then there
  exists $K>0$ such that the following holds. For all
  $g\in L^{1}_{\mathrm{loc}}(\R)$, all intervals $Q$ and all $c\in(0,1)$
  there exists  an open set ${V}_{g,1,Q}$ satisfying
 \begin{equation*}
  \left|{V}_{g,1,Q}\right| \leq  c |Q|,
\end{equation*}
such that 
  \begin{equation} \label{mass-embedding-bound} \left\|A (g\cic{1}_{3Q})
     \cic{1}_{\mathsf{T}(Q)\setminus \mathsf{T}(V_{g,1,Q})}\right\|_{L^{\tau}( \mathsf s^{m})} \leq
    K |Q|^{\frac1\tau} \l g\r_{3Q,1}.
\end{equation}
\end{proposition}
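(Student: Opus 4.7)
The plan is to deduce the localized embedding \eqref{mass-embedding-bound} from the \emph{global} variational outer $L^\tau(\mathsf{s}^m)$ embedding for $A$ proved by Uraltsev \cite{Ur15}, combined with a Calder\'on-Zygmund splitting of $g\cic{1}_{3Q}$ at height $\lambda:= C\l g\r_{3Q,1}$. The constant $C = C(c)$ is chosen large enough that the open set
\[
V_{g,1,Q}:=\big\{x\in\R\,:\, \mathrm{M}_1(g\cic{1}_{3Q})(x) > \lambda\big\}
\]
satisfies $|V_{g,1,Q}|\leq c|Q|$ by the weak-type $(1,1)$ bound for the Hardy-Littlewood maximal function $\mathrm{M}_1$; up to the mild enlargement carried out in the treatment of the bad part below, I may additionally assume that $V_{g,1,Q}$ coincides with the superlevel set $\{\mathrm{M}_1\cic{1}_{V_{g,1,Q}}>10^{-2}\}$. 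Writing $g\cic{1}_{3Q} = g_1 + g_2$ with $\|g_1\|_\infty\lesssim\lambda$, $g_2$ supported in $V_{g,1,Q}$, and $\|g_2\|_{L^1}\lesssim\lambda|Q|$, the sublinearity of $A$ reduces \eqref{mass-embedding-bound} to separate estimates for $A(g_1)$ and $A(g_2)$.

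The good part $A(g_1)$ is handled directly by the global embedding of \cite{Ur15}, which for $\tau>r'$ asserts that $\|A(h)\|_{L^\tau(\mathsf{s}^m)}\lesssim \|h\|_{L^\tau(\R)}$. Applied to $g_1$ together with log-convexity of $L^p$-norms this gives
\[
\|A(g_1)\|_{L^\tau(\mathsf{s}^m)}\lesssim\|g_1\|_\infty^{1-1/\tau}\|g_1\|_{L^1}^{1/\tau}\lesssim\lambda^{1-1/\tau}(\lambda|Q|)^{1/\tau}=\lambda|Q|^{1/\tau},
\]
which is the required bound. This is the easy direction, as the outer $L^\tau$ global theory can be imported wholesale and no tail analysis is needed.

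For the bad part $A(g_2)$ the decisive observation is that the restricted function $A(g_2)\cic{1}_{\mathsf{T}(Q)\setminus\mathsf{T}(V_{g,1,Q})}$ has support of outer $\mu$-measure at most $|Q|$, so the elementary comparison $\|F\|_{L^\tau(\mathsf{s}^m)}\leq|Q|^{1/\tau}\|F\|_{L^\infty(\mathsf{s}^m)}$ reduces the problem to the \emph{uniform} size estimate
\[
\sup_{\mathsf{T}(I,\xi)}\mathsf{s}^m\big(A(g_2)\cic{1}_{\mathsf{T}(Q)\setminus\mathsf{T}(V_{g,1,Q})}\big)(\mathsf{T}(I,\xi))\lesssim \lambda.
\]
This uniform bound is the main obstacle: although $g_2$ is supported in $V_{g,1,Q}$, the truncated wave packets $\Psi^{\xi_-,\xi_+}_{t,\eta}$ defining $A$ in \eqref{wptm} are Schwartz rather than compactly supported, so $A(g_2)$ has non-negligible Schwartz tails outside $\mathsf{T}(V_{g,1,Q})$. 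The enlargement $V_{g,1,Q}\supset\{\mathrm{M}_1\cic{1}_{V_{g,1,Q}}>10^{-2}\}$, which still has measure $\leq c|Q|$ by weak-type $(1,1)$, forces every $(u,t,\eta)\notin\mathsf{T}(V_{g,1,Q})$ to satisfy $|(u-t,u+t)\cap V_{g,1,Q}|\ll t$. Combining the Schwartz decay $|\Psi_{t,\eta}^{\xi_-,\xi_+}(x-u)|\lesssim t^{-1}(1+|x-u|/t)^{-N}$ with the Calder\'on-Zygmund mass estimate $\|g_2\cic{1}_{I_k}\|_{L^1}\lesssim\lambda|I_k|$ on Whitney intervals $I_k$ of the bad set, summed against the resulting geometric series in $\dist(I_k,u)/t$, yields the pointwise bound $|A(g_2)(u,t,\eta)|\lesssim \lambda$ for all such $(u,t,\eta)$; feeding this into the $L^2$ and $L^1$ averages defining $\mathsf{s}^m$ produces the desired uniform size estimate, completing the proof.
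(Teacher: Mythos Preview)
Your treatment of the good part $g_1$ via the global $L^\tau$ embedding from \cite{Ur15} is correct, and the overall Calder\'on--Zygmund strategy is natural. However, the argument for the bad part $g_2$ has a genuine gap. You derive the pointwise estimate $|A(g_2)(u,t,\eta)|\lesssim\lambda$ for $(u,t,\eta)\notin\mathsf{T}(V_{g,1,Q})$, and then assert that ``feeding this into the $L^2$ and $L^1$ averages defining $\mathsf{s}^m$ produces the desired uniform size estimate.'' This last step fails: for any tent $\mathsf{T}=\mathsf{T}(I,\xi)$, the Lebesgue measure of both $\mathsf{T}$ and $\mathsf{T}^o$ is \emph{infinite}, since for each $(u,t)$ the $\eta$-fiber has length $\sim t^{-1}$ and $\int_0^{|I|} t^{-1}\,\d t$ diverges. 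A uniform pointwise bound therefore gives no control whatsoever on $\frac{1}{|I|}\int_{\mathsf{T}^o}|A(g_2)|$ or on $\big(\frac{1}{|I|}\int_{\mathsf{T}}|A(g_2)|^2\big)^{1/2}$. Finiteness of $\mathsf{s}^m(A(h))$ for any $h$ already relies on the specific frequency localization property \eqref{onesummand} of the truncated wave packets (for fixed $x,j$ only a dyadic range of scales $t$ contributes within $\mathsf{T}^o(I,\xi)$), and exploiting this together with the Whitney geometry and the $\ell^{r'}$ constraint on $\vec a$ to get the required size bound is precisely the nontrivial content of the weak-type endpoint theorem in \cite{Ur15}; it cannot be replaced by a pointwise tail estimate.

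By contrast, the paper does not attempt a self-contained Calder\'on--Zygmund argument at all. It simply invokes the weak-type $L^1$ iterated outer measure bound of \cite[Theorem 1.3]{Ur15} applied to $g\cic{1}_{3Q}$ at level $\lambda=cK\langle g\rangle_{3Q,1}$: that theorem directly furnishes a union $V_{g,1,Q}$ of disjoint intervals with $|V_{g,1,Q}|\lesssim K^{-1}|Q|$ such that \eqref{mass-embedding-bound} holds, and one concludes by choosing $K$ large enough. In other words, the exceptional set $V_{g,1,Q}$ is produced by the black-box weak endpoint, not by a maximal-function superlevel set as in your construction, and the difficult size estimate off $\mathsf{T}(V_{g,1,Q})$ is entirely absorbed into the cited result rather than re-proved.
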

We stress that the constant $K$ in Proposition \ref{GU} does not depend on the parameters
$\vec a,\vec \xi,\Xi,N$ appearing in the definition \eqref{wptm} of
the map $A$.  

The above two propositions appear in \cite{Ur15} in a
somewhat different form that uses the notion of iterated outer measure
spaces introduced therein.  
We derive the statement of Propositions \ref{GU} by using the
weak boundedness on $L^{1}(\R)$ of the map \eqref{wptm} of
\cite[Theorem 1.3]{Ur15}.  In particular that result, applied to the
function  $g \cic{1}_{3Q}$  for  $\lambda=c K\l g\r_{3Q,1}$, yields a 
 collection of disjoint open intervals   $\mathcal I$ and  
 \[V_{g,1,Q}:=  \bigcup_{I \in \mathcal I} I,\qquad
 \left|V_{g,1,Q}\right| \leq { \frac{C|Q|}{K}} 
\]
so that \eqref{mass-embedding-bound} holds as required. We conclude by choosing $K\geq C/c$.
A similar procedure can be used to obtain Proposition \ref{FY} from
\cite[Theorem. 1.2]{Ur15}. 

In effect, we have shown that the
formulation of the boundedness properties of the embedding maps
\eqref{wpt} and \eqref{wptm} as expressed in Propositions \ref{FY} and
\ref{GU} are equivalent to the iterated outer measure formulation of
\cite{Ur15}. Furthermore the use of iterated outer measure $L^{p}$ norms allowed
us to bootstrap the above results to $L^{p}_{\textrm{loc}}(\R)$ generality from an a-priori type statement, as
illustrated in \cite[Section 2.1]{Ur15}.

\section{Proof of Proposition \ref{propmain}} 
\label{secpf1} 
Throughout this proof, the exponent $p\in (r',\infty)$ is fixed and
all the implicit constants are allowed to depend on $r,p$ without
explicit mention.  Since the linearization parameters play no explicit
role in the upcoming arguments we omit them from the notation, assume
them fixed and simply write $ \B(f,g)$ for the form
$ \B_{\vec \xi,\vec a } (f,g)$ defined in \eqref{defB}. Given any
interval $Q$, we introduce the localized
version
\begin{equation}
\label{therealBQ}
 \B_{Q}(f,g):=\int\displaylimits_{\mathsf{T}(Q)}
  F(f)(u,t,\eta)A(g)(u,t,\eta) \, \d u \d t \d \eta.
\end{equation}
\subsection{The principal iteration}
 The main step of the proof of Proposition \ref{propmain} is contained in the following lemma, which we will apply iteratively. 
\begin{lemma} \label{lemmaiter} There exists a positive constant $K$ such that the following holds. Let $f,g\in L^\infty(\R)$ and compactly supported, and $Q\subset\R$ be any interval.  
There exists  a countable  collection of   disjoint  open  intervals
$\mathcal{I}_{Q} $ such that 
\begin{align}
\label{lemmaiter3} &  \bigcup_{I   \in \mathcal{I}_{Q}}I\subset Q, \qquad  \sum_{I   \in \mathcal{I}_{Q}} |I| \leq 2^{-12} |Q|
\end{align}
and such that 
\begin{equation}
\label{itereq}
\B_{Q}(f\cic{1}_{3Q},g\cic{1}_{3Q}) \leq K | Q| \l f\r_{  3Q,p}  \l g\r_{ 3 Q,1} +\sum_{I \in\mathcal{I}_{Q}}   \B_{I} (f\cic{1}_{3I},g  \cic{1}_{3I}).
\end{equation}
\end{lemma}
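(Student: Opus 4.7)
My plan is a one-step stopping argument that relies on the localized outer-$L^p$ embeddings of Section \ref{secloc} to produce the ``sparse'' main term and isolates the remainder on tents over a countable family of disjoint subintervals, which will serve as the next generation in the iteration. Since $p>r'$, I fix a H\"older conjugate pair $(\sigma,\tau)$ with $\sigma>p'$ and $\tau>r'$: picking $\tau$ slightly larger than $r'$ makes $\sigma$ slightly smaller than $r$, and the hypothesis $p'<r$ ensures $\sigma>p'$. Apply Proposition \ref{FY} to $f$ with parameters $(p,Q,c)$, where $c:=2^{-13}$, to produce an open set $U_{f,p,Q}$ of measure at most $c|Q|$, and apply Proposition \ref{GU} to $g$ with $(Q,c)$ to produce $V_{g,1,Q}$ of the same size. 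Let $E:=(U_{f,p,Q}\cup V_{g,1,Q})\cap Q$; since $|E|\le 2c|Q|=2^{-12}|Q|$, I take $\mathcal{I}_{Q}$ to be the family of connected components of the open set $E$, a countable collection of disjoint open subintervals of $Q$ realizing \eqref{lemmaiter3}.

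I next split the domain of integration in \eqref{therealBQ} as the disjoint union of $\mathsf T(Q)\setminus\mathsf T(U_{f,p,Q}\cup V_{g,1,Q})$ and $\mathsf T(Q)\cap\mathsf T(U_{f,p,Q}\cup V_{g,1,Q})$. On the first (``good'') region, I apply the outer H\"older inequality \eqref{outerholder} with the chosen exponents $(\sigma,\tau)$; the resulting outer-$L^\sigma(\mathsf s^e)$ and outer-$L^\tau(\mathsf s^m)$ norms are controlled by the embedding estimates \eqref{energy-embedding-bound} and \eqref{mass-embedding-bound}, because the integration region is contained in both $\mathsf T(Q)\setminus\mathsf T(U_{f,p,Q})$ and $\mathsf T(Q)\setminus\mathsf T(V_{g,1,Q})$. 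Taking the product yields exactly the target bound $K|Q|\l f\r_{3Q,p}\l g\r_{3Q,1}$, accounting for the first summand in \eqref{itereq}.

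On the second region I use the containment $\mathsf T(U_{f,p,Q}\cup V_{g,1,Q})\cap\mathsf T(Q)\subset\bigcup_{I\in\mathcal{I}_{Q}}\mathsf T(I)$, and for each $I\in\mathcal{I}_{Q}$ decompose $f\cic{1}_{3Q}=f\cic{1}_{3I}+f\cic{1}_{3Q\setminus 3I}$ together with the analogous splitting of $g\cic{1}_{3Q}$. The diagonal piece $(f\cic{1}_{3I},g\cic{1}_{3I})$ integrated over $\mathsf T(I)$ is exactly $\B_{I}(f\cic{1}_{3I},g\cic{1}_{3I})$, producing the recursive sum in \eqref{itereq}. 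The three off-diagonal cross-terms, where at least one factor is supported in $3Q\setminus 3I$, are the main obstacle: Propositions \ref{FY} and \ref{GU} do not apply directly, since they require the input to live in $3I$. Overcoming this is the technical heart of the argument and corresponds to the novel \emph{ad-hoc} embedding theorems advertised in the introduction. The plan is to leverage the Schwartz decay of $\psi_{t,\eta}$ and of the truncated wave packets $\Psi_{t,\eta}^{\xi_-,\xi_+}$ at scale $t\le|I|$: for $(u,t,\eta)\in\mathsf T(I)$, interaction with an input supported outside $3I$ yields an arbitrarily fast polynomial decay factor in $\mathrm{dist}(\mathrm{supp},3I)/|I|$. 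Combined with the packing bound $\sum_{I\in\mathcal{I}_{Q}}|I|\le 2^{-12}|Q|$, this decay should allow each off-diagonal contribution to be absorbed into $K|Q|\l f\r_{3Q,p}\l g\r_{3Q,1}$ after summing over $\mathcal{I}_{Q}$, completing \eqref{itereq}.
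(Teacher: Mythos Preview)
Your overall architecture matches the paper's: split $\mathsf T(Q)$ into a good region where the outer H\"older inequality plus Propositions \ref{FY}--\ref{GU} give the sparse main term, and a union of tents $\mathsf T(I)$ over stopping intervals where you localize and peel off the recursive piece. The genuine gap is in the construction of the stopping set. You take $E=(U_{f,p,Q}\cup V_{g,1,Q})\cap Q$, while the paper enlarges this by the superlevel sets $\{\mathrm M_p f>c^{-1}\l f\r_{3Q,p}\}$ and $\{\mathrm M_1 g>c^{-1}\l g\r_{3Q,1}\}$. That enlargement is not cosmetic: it forces every component $I\in\mathcal I_Q$ to satisfy $3I\cap E^c\neq\varnothing$, hence
\[
\inf_{x\in 3I}\mathrm M_p f(x)\lesssim \l f\r_{3Q,p},\qquad \inf_{x\in 3I}\mathrm M_1 g(x)\lesssim \l g\r_{3Q,1},
\]
and these bounds are exactly what close the off-diagonal estimates. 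Without them your cross terms cannot be absorbed into $K|Q|\l f\r_{3Q,p}\l g\r_{3Q,1}$.

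To see why, look at $\B_I(f\cic{1}_{3I},g\cic{1}_{3Q\setminus 3I})$. Your stated decay factor $(\mathrm{dist}(\mathrm{supp},3I)/|I|)^{-N}$ is vacuous here, since $\mathrm{dist}(3Q\setminus 3I,3I)=0$. The paper obtains decay only after a further decomposition of $\mathsf T(I)$ into Carleson boxes $\mathsf{box}(P)$ at dyadic scales $|P|=2^{-k}|I|$: on $\mathsf{box}(P)$ the ``out'' factor $A(g\cic{1}_{3Q\setminus 3I})$ picks up $2^{-Nk}$ because $\mathrm{dist}(P,(3I)^c)\ge|I|=2^k|P|$, but the ``in'' factor $F(f\cic{1}_{3I})$ is only bounded by $|P|^{1/\sigma}\inf_{3P}\mathrm M_p f$. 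Summing over $P$ and $k$ one arrives at
\[
\B_I(f\cic{1}_{3I},g\cic{1}_{3Q\setminus 3I})\lesssim |I|\Big(\inf_{3I}\mathrm M_p f\Big)\Big(\inf_{3I}\mathrm M_1 g\Big),
\]
and this is where the maximal stopping is indispensable: with your $E$, nothing prevents $f$ from being sharply concentrated inside some component $I$ of $U_{f,p,Q}$, making $\inf_{3I}\mathrm M_p f$ arbitrarily large compared to $\l f\r_{3Q,p}$. The packing bound $\sum_I|I|\le 2^{-12}|Q|$ does not compensate for this. The fix is precisely to add the two maximal superlevel sets to $E$ before taking connected components.
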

The proof of the lemma consists of several steps, which we now begin. Notice that  there is no loss in generality with assuming that $f,g$ are supported on $3Q$: we do so for mere notational convenience.
\subsubsection{Construction of $\mathcal{I}_{ Q}$}    Referring to the notations of Section \ref{secloc},  set
\[
\begin{split}
&E_{f,Q}= U_{f,p,Q} \cup \left\{x\in \R: \mathrm{M}_p f (x) > c^{-1}\l f\r_{  3Q,p}   \right\},
\\
&E_{g,Q}= V_{g,1,Q} \cup \left\{x\in \R: \mathrm{M}_1 g (x) > c^{-1}\l g\r_{  3Q,1}   \right\},
\\ &
E_{Q}= Q\cap \left(E_{f,Q} \cup E_{g,Q}\right).
\end{split}
\]
Write the open set $E_Q$ as the union of a countable collection $I \in \mathcal{I}_{Q}$  of disjoint open  intervals.
Then \eqref{lemmaiter3} holds provided that $c$ is chosen small enough. Also, necessarily $3I\cap E_{Q}^{c}\neq \varnothing$ if $I \in \mathcal I_Q$, so that 
\begin{equation}
\label{maxfbd}
\inf_{x \in 3I} \mathrm{M}_1 f(x) \lesssim \l f \r_{3Q,p}, \qquad \inf_{x \in 3I} \mathrm{M}_1 g(x) \lesssim \l g \r_{  3Q,1}.
\end{equation}
 For further use we note that, with reference to the notations of Propositions \ref{FY} and \ref{GU},
\begin{equation}
\label{goodcover}
 \mathsf{T}(Q) \setminus\mathsf{T}(E_Q)  \subset   \mathsf{T}(Q) \setminus \left( \mathsf{T}(U_{f,p,Q})\cup \mathsf{T}(V_{g,1,Q}) \right)   
\end{equation}
  This completes the construction of $\mathcal I_Q$. 
\subsubsection{Proof of  \eqref{itereq}} We begin by using 
\eqref{tent-over-set} to partition the outer integral over $\mathsf{T}(Q)$ as 
\begin{equation}
\label{pfiter1}
\B_Q(f ,g )  \leq \int\displaylimits_{\mathsf{T}(Q)\setminus \mathsf{T}(E_Q)}   F(f ) A(g ) \, \d u \d t \d \eta +  \sum_{I\in \mathcal{I}_{Q}}  \B_{ I}(f ,g )  \end{equation}
Choosing $\tau\in (r',p)$, the dual exponent $\sigma=\tau'\in(p',\infty)$. By virtue of \eqref{goodcover}, we may apply the outer H\"older inequality \eqref{outerholder} and the embeddings Propositions \ref{FY} and \ref{GU} to control the first summand in 
\eqref{pfiter1} by an absolute constant times
\[
 \left\|F (f ) \cic{1}_{\mathsf{T}(Q)\setminus \mathsf{T}(U_{f,p,Q})}\right\|_{L^{\sigma}(  \mathsf s^\ell)} \left\|A (g )  \cic{1}_{\mathsf{T}(Q)\setminus \mathsf{T}(V_{g,1,Q})}\right\|_{L^{\tau}(  \mathsf s^o)} \lesssim   |Q|    \l f\r_{  3Q,p} \l g\r_{ 3 Q,1 }.\]
We turn to the second summand in \eqref{pfiter1}, which is less than or equal to
\[ \sum_{I \in \mathcal{I}_{ Q}}   \B_{I} (f\cic{1}_{3I},g \cic{1}_{3I})+ 
\sum_{\substack{(\mathsf{a},\mathsf{b}) \in \{\mathsf{in},\mathsf{out}\}^2\\ (\mathsf{a},\mathsf{b}) \neq  (\mathsf{in},\mathsf{in})}} \sum_{I \in \mathcal{I}_{ Q}}   \B_{I}(f\cic{1}_{I^{\mathsf{a}}},g \cic{1}_{I^{\mathsf{b}}}).
\]
where $I^{\mathsf{in}}=3I, I^{\mathsf{out}}=3Q\setminus 3I $.
The first term in the above display appears on the right hand side of \eqref{itereq}. We claim that
\begin{equation}
\label{tailsdom0}
\sum_{I \in \mathcal{I}_{ Q}}   \B_{I}(f\cic{1}_{I^{\mathsf{a}}},g \cic{1}_{I^{\mathsf{b}}})\lesssim |Q| \l f\r_{3Q,p}\l g\r_{ 3Q,1}, \qquad (\mathsf{a},\mathsf{b}) \neq  (\mathsf{in},\mathsf{in})
\end{equation}
thus leading to the required estimate for \eqref{itereq}.
 Assume $\mathsf{a}=\mathsf{in},\mathsf{b}=\mathsf{out}$ for the sake of definiteness, the other cases being identical. Fix $ I \in \mathcal{I}_{ Q}$. We will show that \begin{equation}
\label{tailsdom}
\B_{I}(f\cic{1}_{I^{\mathsf{in}}},g \cic{1}_{I^{\mathsf{out}}})\lesssim |I| \l f\r_{3Q,p}\l g\r_{ 3Q,1}.
\end{equation}
whence \eqref{tailsdom0} follows by summing over $I\in\mathcal I_Q$ and taking advantage of \eqref{lemmaiter3}.\subsubsection{Proof of \eqref{tailsdom}}  We introduce the Carleson box over the interval $P$
\[
\mathsf{box}(P)=\left\{(u,t,\eta)\in \mathbb X: u\in P, \textstyle \frac12 |P|\leq t< |P|\right\}
\]
Fix $I\in \mathcal I_Q$. At the root  of our argument for \eqref{tailsdom} is the fact that $\mathrm{supp} \,g \cic{1}_{I^{\mathsf{out}}}$ lies outside $3I$.  This leads to the exploitation of  the following lemma, whose proof is given at the end of the paragraph.
\begin{lemma}
\label{taillemma}
Let $P$ be any interval, $h\in L^p_{\mathrm{loc}}(\R)$, and $\tau, \sigma $ as above. There holds
\begin{align}
&\|A(h) \cic{1}_{\mathsf{box}(P)}\|_{L^{\tau}(\mathsf{s}^{m})}\lesssim
  |P|^{\frac{1}{\tau}} \left( 1+{  \frac{\mathrm{dist}(P, \mathrm{supp}
  \, h)}{|P|}} \right)^{-100} \inf_{x \in 3P} \mathrm{M}_1
  h(x), \label{tail2}
\\\label{tail1}
&
  \|F(h) \cic{1}_{\mathsf{box}(P)}\|_{L^{\sigma}(\mathsf{s}^{e})}\lesssim |P|^{\frac{1}{\sigma}}\left( 1+{  \frac{\mathrm{dist}(P, \mathrm{supp} \, h)}{|P|}} \right)^{-100} \inf_{x \in 3P} \mathrm{M}_p h(x).
\end{align}
\end{lemma}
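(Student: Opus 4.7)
The plan is a two-step argument: a uniform pointwise bound on $\mathsf{box}(P)$, then passage to outer $L^{\sigma},L^{\tau}$.

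\emph{Pointwise stage.} For $(u,t,\eta)\in\mathsf{box}(P)$, $u\in P$ and $t\in[|P|/2,|P|)$, so $|u-x|\geq d:=\mathrm{dist}(P,\mathrm{supp}\,h)$ for every $x\in\mathrm{supp}\,h$. Schwartz decay $|\psi_{t,\eta}(u-x)|\lesssim_N t^{-1}(1+|u-x|/t)^{-N}$ and a dyadic decomposition of $\mathrm{supp}\,h$ into annuli $A_k=\{2^kt\leq|u-x|<2^{k+1}t\}$ (only $k$ with $2^kt\gtrsim d$ matter), together with H\"older's inequality and the inclusion $A_k\subset B(x_0,C 2^k|P|)$ for any $x_0\in 3P$, yield
\begin{equation*}
F(h)(u,t,\eta)\lesssim_N (1+d/|P|)^{-N+2}\inf_{x_0\in 3P}\mathrm M_p h(x_0).
\end{equation*}
For $A(h)$ one runs the identical argument: by \eqref{onesummand} of Remark \ref{twprem}, for each $(x,u,t,\eta)$ the sum $\sum_j a_j(x)\Psi_{t,\eta}^{\xi_j(x),\xi_{j+1}(x)}(x-u)$ reduces to at most a single nonzero summand; since $\|\vec a\|_{\ell^\infty}\leq\|\vec a\|_{\ell^{r'}}=1$ and the truncated wave packets $\Psi_{t,\eta}^{\xi_-,\xi_+}$ satisfy the same uniform Schwartz bounds as $\psi_{t,\eta}$ (Remark \ref{twprem}), the same annular scheme produces
\begin{equation*}
A(h)(u,t,\eta)\lesssim_N (1+d/|P|)^{-N+2}\inf_{x_0\in 3P}\mathrm M_1 h(x_0).
\end{equation*}

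\emph{Outer $L^p$ stage.} Write $C_\ast$ for the pointwise upper bound just established. For any tent $\mathsf T(I,\xi)$ intersecting $\mathsf{box}(P)$, one has $|I|\gtrsim|P|$ (from $t\sim|P|<|I|$); the $L^\infty$ part of $\mathsf s^e(F(h)\cic{1}_{\mathsf{box}(P)})(\mathsf T)$ is trivially $\leq C_\ast$, and the $L^2$ part is $\leq C_\ast(|\mathsf T^\ell\cap\mathsf{box}(P)|/|I|)^{1/2}\lesssim C_\ast$, because $|\mathsf T^\ell\cap\mathsf{box}(P)|\lesssim|P|$ (the $\eta$-range of $\mathsf T^\ell$ is $O(1/t)$, while $t\sim|P|$ and $u\in P$). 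The analogous verification handles $\mathsf s^m$. Thus $\|F(h)\cic{1}_{\mathsf{box}(P)}\|_{L^\infty(\mathsf s^e)}$ and $\|A(h)\cic{1}_{\mathsf{box}(P)}\|_{L^\infty(\mathsf s^m)}$ are both $\lesssim C_\ast$. Combining this with the super-level-set estimate $\mu(\{\mathsf s>\lambda\}\cap\mathsf{box}(P))\lesssim|P|$, valid uniformly in $\lambda>0$, and integrating against $\lambda^{\sigma-1}\,d\lambda$ (respectively $\lambda^{\tau-1}\,d\lambda$) up to $\lambda\simeq C_\ast$, delivers the claims \eqref{tail1} and \eqref{tail2}.

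\emph{Main obstacle.} The principal subtle point is the super-level-set estimate in the second stage: since $\mathsf{box}(P)$ has unbounded extent in the $\eta$-direction it cannot be covered by tents of total pre-measure $O(|P|)$ on the nose. The resolution is to exploit the $\eta$-decay of the wave packet transforms of the compactly supported $h$, which confines the effective $\eta$-support of the restricted transforms to a bounded window that is then standardly covered by a collection of scale-$|P|$ tents centered on a frequency lattice, of total pre-measure $O(|P|)$. The pointwise step for $A(h)$ is also mildly delicate, requiring the careful coordination of \eqref{onesummand}, $\|\vec a\|_{\ell^\infty}\leq 1$, and the uniform Schwartz bounds from Remark \ref{twprem}, but otherwise follows the same template as for $F(h)$.
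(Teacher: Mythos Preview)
Your pointwise stage is correct, including the observation via \eqref{onesummand} that at most one summand survives in the definition of $A(h)$. The gap is in the outer $L^{p}$ stage, and it is not a mild one.

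The super-level-set bound $\mu\big(\mathsf s(F(h)\cic{1}_{\mathsf{box}(P)})>\lambda\big)\lesssim |P|$ does \emph{not} hold for all $\lambda>0$. Since $\mathsf{box}(P)$ is unbounded in $\eta$, its outer measure is infinite: for every $\xi\in\R$ one can find a tent $\mathsf T(I,\xi)$ with $|I|\sim|P|$ meeting $\mathsf{box}(P)$, and covering all of these costs infinite premeasure. With only the uniform pointwise bound $|F(h)|\leq C_{*}$ available (your estimate is independent of $\eta$), once $\lambda<C_{*}$ the $L^{\infty}$ component of $\mathsf s^{e}$ forces any admissible $E$ to contain essentially all of $\mathsf{box}(P)\cap\{|F(h)|>\lambda\}$. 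Your proposed resolution, frequency decay of the transforms, does not save this: for $h\in L^{p}_{\mathrm{loc}}$ with no smoothness assumed, $F(h)(u,t,\eta)=t^{-1}e^{i\eta u}\,\widehat{h\,\psi((u-\cdot)/t)}(\eta)$ has no quantifiable decay rate in $\eta$ (Riemann--Lebesgue is qualitative), so the ``bounded window'' cannot be chosen uniformly in $h$. For $A(h)$ the $\eta$-support is controlled by the finite set $\Xi$, but its size is $O(N)$, and the estimate must be uniform in $N$.

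The paper's argument bypasses any direct covering of $\mathsf{box}(P)$. It decomposes $h=h\cic{1}_{9P}+\sum_{k\geq 3}h\cic{1}_{3^{k}P\setminus 3^{k-1}P}$ and applies the embedding theorems (Propositions \ref{FY} and \ref{GU}) to each piece; these propositions deliver genuine outer $L^{\sigma}$, $L^{\tau}$ bounds and internally handle the $\eta$-direction. For the core piece one takes $Q=3P$ with $c$ small enough that the exceptional set has measure $<|P|/2$, forcing $\mathsf{box}(P)\subset\mathsf T(3P)\setminus\mathsf T(V)$. For the $k$-th annular piece the decay is manufactured by the substitution $\widetilde\Psi_{t,\eta}^{\xi_{-},\xi_{+}}(x-u)=(1+|x-c(P)|/|P|)^{2N}\Psi_{t,\eta}^{\xi_{-},\xi_{+}}(x-u)$: multiplication by a polynomial preserves the frequency support, so $\widetilde\Psi$ remain admissible truncated wave packets and the embedding theorem applies to the modified transform $\widetilde A$, while on $\mathsf{box}(P)$ one gains the factor $3^{-2Nk}$ since $|x-c(P)|\sim 3^{k}|P|$ on the annulus. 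This is the missing idea; a purely pointwise bound followed by covering cannot produce the factor $|P|^{1/\tau}$ here.
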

Now let $P\in \mathcal P_k( I)$ be the collection of dyadic subintervals  of   $I$ with $|P|=2^{-k}|I|$. If $P\in\mathcal P_k (I)$ there holds  $ \mathrm{dist}(P,  I^{\mathsf{out}})\geq  |I|=2^{k}|P|  $. Moreover
\[
\sum_{P\in \mathcal P_k(I)} |P| =   |I|, \qquad \inf_{x \in 3P} \mathrm{M}_1 h(x) \lesssim 2^{k } \inf_{x \in 3I} \mathrm{M}_1 h(x) 
\]
for all locally integrable $h$. Since 
\[
\mathsf{T}(I) \subset \bigcup_{k=0}^\infty  \bigcup_{P\in \mathcal P_k( I)}\mathsf{box}(P)
\]
we obtain, using the outer H\"older inequality  \eqref{outerholder} to pass to the third line, the chain of inequalities
\[
\begin{split} &\quad 
\B_{I}(f\cic{1}_{I^{\mathsf{in}}},g \cic{1}_{I^{\mathsf{out}}}) \leq \sum_{k\geq 0} \sum_{   P\in \mathcal P_k(I)} \;\int \displaylimits_{\mathsf{box}(P)} F(f \cic{1}_{I^{\mathsf{in}}} )A(g \cic{1}_{I^{\mathsf{out}}} ) \, \d u \d t \d \eta \\ &\leq \sum_{k\geq 0} \sum_{   P\in \mathcal P_k(I)}  \|F(f \cic{1}_{I^{\mathsf{in}}}) \cic{1}_{\mathsf{box}(P)}\|_{L^{\sigma}(\mathsf{s}^{e})}
\|A(g \cic{1}_{I^{\mathsf{out}}})\cic{1}_{\mathsf{box}(P)}\|_{L^{\tau}(\mathsf{s}^{m})}
 \\ &\lesssim   \sum_{k\geq 0} \sum_{   P\in \mathcal P_k(I)} |P|
\left(\inf_{x \in 3P} \mathrm{M}_p f (x) \right)  
\left(2^{-99k}\inf_{x \in 3P} \mathrm{M}_p g (x)\right) \\ &\leq  \sum_{k\geq 0} 2^{-98k}\sum_{   P\in \mathcal P_k(I)} |P|
\left(\inf_{x \in 3I} \mathrm{M}_p f (x)\right)  
\left( \inf_{x \in 3I} \mathrm{M}_1 g   (x)\right)  \lesssim |I| \left(\inf_{x \in 3I} \mathrm{M}_p f (x)\right)  
\left( \inf_{x \in 3I} \mathrm{M}_1 g (x)\right)\end{split}
\]
which, by virtue of \eqref{maxfbd}, complies with \eqref{tailsdom}. 
 \begin{proof}[Proof of Lemma \ref{taillemma}]

   We show how estimate  \eqref{tail2} follows from Proposition \ref{GU}. Then,   \eqref{tail1} is obtained from  Proposition \ref{FY} in a
   similar manner.
   By quasi-sublinearity and
   monotonicity of the outer measure $L^{\tau}(s^{m})$ norm we have that
   \begin{equation}\label{geometric-A-decomposition}
  \begin{split}
   & \quad  \|A(h) \cic{1}_{\mathsf{box}(P)}\|_{L^{\tau}(s^{m})}\\ & \leq C
     \|A\big(h\cic{1}_{9P}\big) \cic{1}_{\mathsf{box}(P)}\|_{L^{\tau}(s^{m})}   +
     \sum_{k=3}^{\infty} C^{k}
     \|A\big(h\cic{1}_{3^{k}P\setminus 3^{k-1}P}\big) \cic{1}_{\mathsf{box}(P)}\|_{L^{\tau}(s^{m})}.
   \end{split}
    \end{equation}
   Applying the embedding bound \eqref{mass-embedding-bound} with
   $c=3^{-2}$ and $Q=3P$ provides us with $V_{h,1,3P}$ such that  $\mathsf{box}(P)\subset
   \mathsf{T}(9P)\setminus \mathsf{T}(V_{h,1,3P})$, whence
   \begin{align*}
     \|A\big(h\cic{1}_{9P}\big)\cic{1}_{\mathsf{box}(P)}\|_{L^{\tau}(s^{m})}\leq
     CK |P|^{\frac{1}{\tau}}\l h \r_{9P,1} \leq
     CK |P|^{\frac{1}{\tau}} \inf_{x\in 3P} M_{1} h(x).
   \end{align*}
   Indeed, we chose $c$ in such a way that $|V_{h,1,3P}|<3^{-1}Q$, which guarantees that $\mathsf{T}(V_{h,1,9P})$ does not intersect $\mathsf{box}(P)$.   We claim that similarly we have that for $k>2$ and for an
   arbitrarily large $N\gg 1$ there holds
   \begin{align*}
     \|A\big(h\cic{1}_{3^{k}P\setminus 3^{k-1}P}\big) \cic{1}_{\mathsf{box}(P)}\|_{L^{\tau}(s^{m})}\leq
     CK 3^{-Nk}|P|^{\frac{1}{\tau}}\l h \r_{3^{k}P,1} \leq
    C K |P|^{\frac{1}{\tau}} 3^{-Nk}\inf_{x\in 3P} M_{1} h(x).
   \end{align*}
   Let
 \[
 (u,t,\eta)\mapsto\Psi_{t,\eta}^{\xi_{-},\xi_{+}}(\cdot-u)
\]
be a choice of truncated wave packets which approximately achieves  the supremum
in $$A(h\cic{1}_{3^{k}P\setminus 3^{k-1}P})(u,t,\eta),$$ cf. \eqref{wptm}. Then
 \[
\widetilde{\Psi}_{t,\eta}^{\xi_{-},\xi_{+}}(\cdot-u):= \textstyle\left(1+ \frac{|(x-u)-c(P)|}{|P|}\right)^{2N}  \Psi_{t,\eta}^{\xi_{-},\xi_{+}}(\cdot-u)
 \]
 are adapted truncated wave packets as well since multiplying by a
 polynomial does not change the frequency support of
 $\Psi_{t,\eta}^{\xi_{-},\xi_{+}}$ and so the conditions on being
 truncated wave packets is maintained. Let $\tilde
 A(h\cic{1}_{3^{k}P\setminus 3^{k-1}P})(u,t,\eta)$ be the embedding
 obtained by using the wave packets $\tilde \Psi_{t,\eta}^{\xi_{-},\xi_{+}}(\cdot-u)$
 instead of $\Psi_{t,\eta}^{\xi_{-},\xi_{+}}(\cdot-u)$. Given that
 $(u,t,\eta)\in \mathsf{box}(P)$ we have that
 \begin{align*}
   |A(h\cic{1}_{3^{k}P\setminus 3^{k-1}P})(u,t,\eta)|\leq C3^{-2Nk} \tilde A(h\cic{1}_{3^{k}P\setminus 3^{k-1}P})(u,t,\eta).
 \end{align*}
 However the bounds \eqref{mass-embedding-bound} also hold for
 $\tilde A$ with an additional multiplicative constant that depends at
 most on $N$. Applying these bounds with $P=3^{k-1}Q$ and  $c=3^{-k}$ we have once
 again that
 \begin{align*}
   \|\tilde A(h\cic{1}_{3^{k}P\setminus 3^{k-1}P}) 
   \cic{1}_{\mathsf{box}(P)}\|_{L^{\tau}(s^{m})} \leq C K |P|^{\frac{1}{\tau}} 3^{k}
   \l h\r_{3^{k}P,1}.
 \end{align*}
As long as $N$ is chosen large enough with respect to $C>1$ appearing
in \eqref{geometric-A-decomposition}, the above display gives the required bound. The
decay factor in term of $\dist(P,\supp h)$ follows from the fact that
the the first  $k_{0}$ terms in \eqref{geometric-A-decomposition} vanish if
$\supp h \cap 3^{k_{0}}P=\varnothing$.
  \end{proof}

  \subsection{The iteration argument}With Lemma \ref{lemmaiter} in
  hand, we proceed to the proof of Proposition \ref{propmain}.  Fix
  $f,g\in L^\infty(\R)$ with compact support. By an application of
  Fatou's lemma, it suffices to prove \eqref{outer-sparsedom} with
  $\mathsf{B}_{Q_0}$ in lieu of $\mathsf B$ for an arbitrary interval
  $Q_0$ with $\supp f,\supp g \subset Q_0$. That is, it suffices to
  construct a sparse collection $\mathcal S$ such that
\begin{equation}
\label{itermain}
\B_{Q_0}(f,g)\leq C \sum_{I \in \mathcal S} |I| \l f\r_{I,p}  \l g\r_{I,1}
\end{equation}
provided that the constant $C$ does not depend on $Q_0.$ We fix such a $Q_0$. Furthermore, as
\[
\B_{Q_0}(f,g)= \sup_{\eps>0}\B_{Q_0,\eps}(f,g), \qquad  \B_{Q,\eps}(f,g):=\int_{\mathsf{T}(Q)  }  F(f)(u,t,\eta)A(g)(u,t,\eta) \cic{1}_{\{t>\eps\}} \, \d u \d t \d \eta
\] 
it suffices to prove \eqref{itermain} with $\B_{Q_0,\eps}$ replacing
$\B_{Q_0}$, with constants uniform in $\eps>0$. We also notice that
Lemma \ref{lemmaiter} holds uniformly, if one replaces all instances
of $\B_{Q}$ in \eqref{therealBQ} by $\B_{Q,\eps}$. From here onwards we
fix $\eps>0$ and drop it from the notation.

  We now perform the following iterative procedure. Set $\mathcal S_0=\{ Q_0\}$. Suppose that the collection of open intervals $Q\in \mathcal S_n$ has been constructed,
and define inductively
\[
\mathcal S_{n+1}= \bigcup_{Q\in \mathcal S_n} \mathcal{I}_{Q}
\]
where $\mathcal{I}_{Q}$ is obtained as in the Lemma \ref{lemmaiter}.   It can be seen inductively that 
\[
Q \in \mathcal S_n \implies |Q| \leq 2^{-12n}|Q_0|.\]
We iterate this procedure as long as  $n\leq N$, where $N$ is taken so that $2^{-12N}|Q_0|<\eps$ holds. At that point we stop the iteration and set
\[
\mathcal S^\star= \bigcup_{n=0}^{N} \mathcal S_{n}.
\]
Making use of estimate \eqref{itereq} along the iteration of Lemma \ref{lemmaiter}  we readily obtain
\[
\B_{Q_0}(f,g)    \lesssim   \sum_{n= 0}^{N-1} \sum_{Q \in \mathcal S_n} |Q|    \l f \r_{  3 Q,p} \l g\r_{3Q,1}  + \sum_{Q\in \mathcal S_N}\sum_{I \in\mathcal{I}_{Q}}   \B_{I} (f\cic{1}_{3I},g  \cic{1}_{3I})    =  \sum_{Q \in \mathcal S^\star} |Q|    \l f \r_{  3 Q,p} \l g\r_{3Q,1} \]
as each term $\B_I$, $I\in \mathcal S_N$ vanishes by the condition on $N$.  
Now, observing that the sets
\[
X_Q :=  Q \setminus\left(\bigcup_{I\in \mathcal S^\star: I \subsetneq Q} I\right)= Q \setminus\left( \bigcup_{I \in\mathcal{I}_{Q}} I \right)\qquad Q \in \mathcal S^\star
\]
are pairwise disjoint and, from \eqref{lemmaiter3},
$
|Q \backslash X_Q| \geq (1-2^{-12})|Q|
$
yields that  $\mathcal S^\star$ is sparse, and so is $\mathcal{S}=\{3Q:Q\in \mathcal S^\star\}$. This completes the proof  of Proposition \ref{propmain}.

%

 \bibliography{DoDPUraltsev}{}
\bibliographystyle{amsplain}
 
\end{document}